\documentclass[12pt]{article}

\usepackage{epsfig}
\usepackage{amsmath,amsfonts,amssymb,amsthm}
\usepackage{lineno}
\usepackage{color}
\usepackage{hyperref}
\usepackage{natbib}
\usepackage{multirow}
\usepackage{geometry}

\usepackage{makecell}
\usepackage[blocks]{authblk}

\newtheorem{thm}{Theorem}
\newtheorem{lem}[thm]{Lemma}

\allowdisplaybreaks

\begin{document}

\title{An Invariant Test for Equality of Two Large Scale Covariance Matrices} 
\author{Taehyeon Koo, Seonghun Cho, Johan Lim \thanks{All authors are at 
Department of Statistics, Seoul National University, Seoul, Korea. Taehyeon Koo and 
Seonghun Cho equally contribute to the paper. All correspondence are to Johan Lim, 
\texttt{E-mail:johanlim@snu.ac.kr}.}} 

\date{} 
\maketitle
\begin{abstract}
\noindent In this work, we are motivated by the recent work of \citet{Zhang:2019} and 
study a new invariant test for equality of two large scale covariance matrices. Two modified 
likelihood ratio tests (LRTs) by \citet{Zhang:2019} are based on the sum of log of eigenvalues (or 1- eigenvalues) of the 
Beta-matrix. However, as the dimension increases, many eigenvalues of the Beta-matrix 
are close to $0$ or $1$ and the modified LRTs are greatly influenced by them. In this work, instead, 
we consider the simple sum of the eigenvalues (of the 
Beta-matrix) and compute its asymptotic normality when all $n_1, n_2, p$ 
increase at the same rate. We numerically show that our test has higher power than two modified likelihood ratio tests 
by \citet{Zhang:2019} in all cases both we and they consider. 


\medskip
\noindent{\bf Keywords:} Equality of two covariance matrices, F-matrix, invariant test, linear spectral statistics, random matrix theory. 
\end{abstract}

\section{Introduction}\label{intro}

We revisit the test of equality (homogeneity) of two covariance matrices, which often allows us simplified procedures for many multivariate 
problems. Suppose we have samples $\mathbf{z}_{i}^{(l)}, i=1,2,\ldots,n_l,$ from a distribution with a mean vector $\mathbf{\mu}_{l}$ and 
covariance matrix $\Sigma_{l}$ for $l=1,2$. The hypothesis, which is of interest, is 
\begin{equation} \label{eqn:hypothesis} 
\mathcal{H}_0: \Sigma_1= \Sigma_2 \quad\mbox{versus} \quad \mathcal{H}_1: \Sigma_1\neq \Sigma_2. 
\end{equation}
As pointed out by \citet{Zhang:2019}, the history of the test draws back to 1930s and a huge number of works are followed in literature. In this paper, we do not aim to compete with all methods in the literature (see Chapter 10 of \citet{Anderson:2003} and references therein). Instead, we focus on a specific invariant test as an alternative to the modified likelihood ratio test (mLRT), which is recently suggested by \citet{Zhang:2019}. In Section 2, we compute the
asymptotic null distribution of the new test, when all $n_1,n_2,p$ increase at the same rate. In Section 3, we numerically show it has higher power than mLRT in all cases both we and they consider. In Section 4, we conclude the paper with some remarks.

\section{An alternative invariant statistic} 

We find $\mathbf{x}_i^{l}=\{x_{ij}^{(l)}\}$, for $i=1,2,\ldots,n_l, l=1,2$, where $x_{ij}^{l}$ are independent and identically distributed (IID) with mean zero and variance one, respectively, and satisfy 
\begin{equation} \nonumber 
\mathbf{z}_{j}^{(l)}=\Sigma_{l}^{1/2}\mathbf{x}_{j}^{(l)}+\mathbf{\mu}_{l}.
\end{equation} 
 Let $S_{n_{1}}$ and $S_{n_{2}}$ be the sample covariance matrix from each population. To build our test, 
we focus on the limiting distribution of eigenvalues of $\mathbf{B}_{n}=n_{1}S_{n_{1}}(n_{1}S_{n_{1}}+n_{2}S_{n_{2}})^{-1}$, 
named as the limiting spectral distribution (LSD) of $\mathbf{B}_{n}$. With the notations  
 \[
 y_{1}=p/n_{1}, y_{2}=p/n_{2},h=\sqrt{y_{1}+y_{2}-y_{1}y_{2}},\alpha_{n}=n_{2}/n_{1}, 
 \]
 the limiting spectral distribution of $\mathbf{B}_{n}$ is evaluated as (see \citet{Zhang:2019}) 
 \begin{equation} \nonumber
  \mathbf{F}_{\gamma_{1},\gamma_{2}}(x)=\frac{(\alpha+1)\sqrt{(x_{r}-x)(x-x_{l})}}{2\pi y_{1}x(1-x)}\delta_{x\in(x_{l},x_{r})},
  \end{equation}
 where $x_{l},x_{r}=\frac{y_{2}(h\mp y_{1})^2}{(y_{1}+y_{2})^2}, y_{1}\longrightarrow \gamma_{1}, y_{2}\longrightarrow \gamma_{2}$ and $\alpha_{n}\longrightarrow \alpha$ as $\min\{n_{1},n_{2},p\} \longrightarrow \infty$.

Let $G_{n_1,n_
2}(x)=p(\mathbf{F}^{\mathbf{B}_{n}}(x)-\mathbf{F}_{y_{1},y_{2}}(x)),$ where 
$\mathbf{F}^{\mathbf{B}_{n}}(x)$ is empirical spectral distribution (ESD) of $\mathbf{B}_{n}$ and $\mathbf{F}_{y_{1},y_{2}}(x)$ is the limit spectral distribution (LSD) of $\mathbf{B}_{n}$ with parameters $\alpha_{n},y_{1},y_{2}$ replacing $\alpha,\gamma_{1},\gamma_{2}$. Our 
main interest is in the limit distribution of  
 \begin{equation} \label{eqn:reg-lss} 
 \left( \int f_{1}(x)dG_{n_{1},n_{2}}(x),...,\int f_{k}(x)dG_{n_{1},n_{2}}(x) \right),
 \end{equation}
 where $f_{1},...,f_{k}$ are analytic functions on complex domain.

Suppose $F^{\{n_{1},n_{2}\}}(x), F^{\{y_{n_{1}},y_{n_{2}}\}}(x)$ are the ESD and LSD of the $F$-matrix $S_{n_{1}}S_{n_{2}}^{-1}$,
and 
\[
\tilde{G}_{n_{1},n_{2}}(x)=p(F^{\{n_{1},n_{2}\}}(x)- F^{\{y_{n_{1}},y_{n_{2}}\}}(x)).
\] 
Following \citet{Bai:2010}, the linear spectral statistic (LSS) of the $F$-matrix for functions $f_{1},...,f_{k}$ that is 
\[
\left(\int f_{1}(x)d\tilde{G}_{n_{1},n_{2}}(x),...,\int f_{k}(x)d\tilde{G}_{n_{1},n_{2}}(x)\right),
\]
under some regular conditions, converges weakly to a Gaussian vector $(X_{f_{1}},...,X_{f_{k}})'$ with means
 \begin{align*}
    {\rm E}X_{f_{i}}=\lim_{r\downarrow 1}\biggl( &\frac{1}{4\pi i}\oint_{|\xi|=1}f_{i}\biggl(\frac{|1+h\xi|^2}{(1-y_{2})^2}\biggr)\biggl[\frac{1}{\xi-r^{-1}}+\frac{1}{\xi+r^{-1}}-\frac{2}{\xi+\frac{y_{2}}{h}}\biggr]d\xi \\
     & +\frac{\Delta_{1}y_{1}(1-y_{2})^2}{2\pi i h^2}\oint_{|\xi|=1}f_{i}\biggl(\frac{|1+h\xi|^2}{(1-y_{2})^2}\biggr)\frac{1}{(\xi+\frac{y_{2}}{h})^3}d\xi \\
     &+\frac{\Delta_{2}(1-y_{2})}{4\pi i}\oint_{|\xi|=1}f_{i}\biggl(\frac{|1+h\xi|^2}{(1-y_{2})^2}\biggr)\frac{\xi^2-\frac{y_{2}}{h^2}}{(\xi+\frac{y_{2}}{h})^2}\biggl[\frac{1}{\xi^2-\frac{y_{2}}{h^2}}-\frac{2}{\xi+\frac{y_{2}}{h}}\biggr]d\xi\biggr)
 \end{align*}
 and the covariance matrix whose $(i,j)$ element is
 \begin{align*}
     {\rm Cov}(X_{f_{i}},X_{f_{j}})=\lim_{r\downarrow 1}\biggl(&-\frac{2}{4\pi^2}\oint_{|\xi_{1}|=1}\oint_{|\xi_{2}|=1}\frac{f_{i}(\frac{|1+h\xi_{1}|^2}{(1-y_{2})^2})f_{j}(\frac{|1+h\xi_{2}|^2}{(1-y_{2})^2}) }{(\xi_{1}-r\xi_{2})^2}d\xi_{1}d\xi_{2}\\
     &-\frac{\Delta_{1}y_{1}(1-y_{2})^2}{4\pi^2h^2}\oint_{|\xi_{1}|=1}\frac{f_{i}(\frac{|1+h\xi_{1}|^2}{(1-y_{2})^2})}{(\xi_{1}+\frac{y_{2}}{h})^2}d\xi_{1}\oint_{|\xi_{2}|=1}\frac{f_{j}(\frac{|1+h\xi_{2}|^2}{(1-y_{2})^2})}{(\xi_{2}+\frac{y_{2}}{h})^2}d\xi_{2} \\ 
     &-\frac{\Delta_{2}y_{2}(1-y_{2})^2}{4\pi^2h^2}\oint_{|\xi_{1}|=1}\frac{f_{i}(\frac{|1+h\xi_{1}|^2}{(1-y_{2})^2})}{(\xi_{1}+\frac{y_{2}}{h})^2}d\xi_{1}\oint_{|\xi_{2}|=1}\frac{f_{j}(\frac{|1+h\xi_{2}|^2}{(1-y_{2})^2})}{(\xi_{2}+\frac{y_{2}}{h})^2}d\xi_{2}\biggr).
 \end{align*}

If $\lambda$ is an eigenvalue of $S_{n_{1}}S_{n_{2}}^{-1}$, the eigenvalue of $\mathbf{B}_{n}=S_{n_{1}}(S_{n_{1}}+dS_{n_{2}})^{-1}$ 
 corresponds to $\lambda$ is $\frac{\lambda}{d+\lambda}$. Using this, we find that 
\begin{eqnarray} 
&& \left( \int f_{1}(x)dG_{n_{1},n_{2}}(x),...,\int f_{k}(x)dG_{n_{1},n_{2}}(x) \right ) \nonumber \\
 && \qquad = \left( \int f_{1}\left(\frac{x}{d+x}\right)d\tilde{G}_{n_{1},n_{2}}(x),...,\int f_{k}\left(\frac{x}{d+x} \right)d\tilde{G}_{n_{1},n_{2}}(x) \right ). \label{en:lss-reg} 
\end{eqnarray} 
In addition, we obtain the LSS of $\mathbf{B}_{n}$ from the above by substituting $d={n_{2}}/{n_{1}}$ in (\ref{en:lss-reg}).

The mLRT statistics in  \citet{Zhang:2019} are
\begin{equation}\nonumber
    \mathcal{L}= \sum_{\lambda_{i}^{\mathbf{B}_{n}}\in (0,1)}[c_{1}\text{log}\lambda_{i}^{\mathbf{B}_{n}}+c_{2}\text{log}(1-\lambda_{i}^{\mathbf{B}_{n}})], \quad
    \tilde{\mathcal{L}}  = \sum_{\lambda_{i}^{\mathbf{B}_{n}}\in (0,1)}{\log \lambda_{i}^{\mathbf{B}_{n}}},
\end{equation}
where $\lambda_{i}^{\mathbf{B}_{n}}$ denotes the $i$-th smallest eigenvalue of $\mathbf{B}_{n}$. In mLRT statistics, 
 the eigenvalues $0$ or $1$ are excluded for defining valid statistics. However, if ${p}/{n_{1}}$ (or ${p}/{n_{2}}$) is close to $1$, many eigenvalues are close to $0$ (or $1$). The eigenvalues 
 either close to $0$ or $1$ explains most part of the statistics. The mLRT statistics $\mathcal{L}$ and $\tilde{\mathcal{L}}$ 
 are sensitive to those and do not fully reflect  
 the information from other eigenvalues of $\mathbf{B}_{n}$.
 
To resolve this difficulty, we consider 
\begin{equation} \nonumber
 \mathcal{P} = \left( {\rm tr}(\mathbf{B}_{n}), {\rm tr}(\mathbf{I}_{p}-\mathbf{B}_{n}) \right)^{\top} = \left(\sum {\lambda_{i}^{\mathbf{B}_{n}}},\sum {(1-\lambda_{i}^{\mathbf{B}_{n}})}\right)^{\top}=\left(\mathcal{P}_{1},\mathcal{P}_{2}\right)^{\top}.
 \end{equation} 
To make above statistic meaningful, we modify it to   
\begin{equation} \label{eqn:newstat} 
\bigg(\sum_{\lambda_{i}^{\mathbf{B}_{n}}<1} {\lambda_{i}^{\mathbf{B}_{n}}},\sum_{\lambda_{i}^{\mathbf{B}_{n}}>0} {(1-\lambda_{i}^{\mathbf{B}_{n}})}\bigg)^{\top}.
\end{equation}

To get the asymptotic null distribution of the proposed statistic, we can find the mean and variance of LSS of $S_{n_{1}}(S_{n_{1}}+dS_{n_{2}})^{-1}$ by setting $k=2$,  $f_{1}(x) = x, f_{2}(x)=1-x$ in the formula above (\ref{en:lss-reg}). However, before we
proceed, we remark that  
\begin{eqnarray}
    &&\mathcal{P}_{2}-p\int(1-x)\frac{(\alpha_{n}+1)\sqrt{(x_{r}-x)(x-x_{l})}}{2\pi y_{1}x(1-x)} dx \nonumber\\
    &&=p-\mathcal{P}_{1}-p+p \int x\frac{(\alpha_{n}+1)\sqrt{(x_{r}-x)(x-x_{l})}}{2\pi y_{1}x(1-x)} dx\nonumber\\
    &&=- \left(\mathcal{P}_{1}-p\int x\frac{(\alpha_{n}+1)\sqrt{(x_{r}-x)(x-x_{l})}}{2\pi y_{1}x(1-x)} dx \right).\label{eq:minus} 
\end{eqnarray}
Thus, the LSS of $\mathcal{P}_{1}$ and $\mathcal{P}_{2}$ are opposite in their sign, and the covariance between LSS of $\mathcal{P}_{1},\mathcal{P}_{2}$ is the negative of asymptotic variance of $\mathcal{P}_{1}$ (or $\mathcal{P}_{2}$).

We now have our main results on the asymptotic null distributions of $\mathcal{P}_1$ and $\mathcal{P}_2$.
\begin{thm} \label{thm:1}
Suppose we assume (i) (moment assumptions)  ${\rm E} (x_{11}^{(1)})={\rm E}(x_{11}^{(2)})=0$, ${\rm E}(x_{11}^{(1)})^2={\rm E}(x_{11}^{(2)})^2=1$, and  ${\rm E}(x_{11}^{(1)})^4=\Delta_{1}+3 < \infty,$ ${\rm E}(x_{11}^{(2)})^4=\Delta_{2}+3 < \infty$,  and 
(ii) (dimensionality assumption) $\min (p,n_{1},n_{2}) \rightarrow \infty$, $y_{1} \neq 1$, $ y_{2} \neq 1$ and  $\lim p/(n_{1}+n_{2})<1$. Then we have
\begin{equation}\label{eqn:clt} 
    \mathcal{K}:=\frac{\mathcal{P}_{1}-p \ell_{n,1}-\mu_{n}}{\sigma_{n}} ~\overset{d}{\longrightarrow}~ Z  \quad \mbox{and} \quad 
    \mathcal{K}^{\prime}:=\frac{\mathcal{P}_{2}-p \ell_{n,2}+\mu_{n}}{\sigma_{n}} ~\overset{d}{\longrightarrow}~ Z,
\end{equation} 
where $Z \sim N(0,1)$,
\begin{equation} \nonumber 
  \ell_{n,1} = \frac{h^2\delta_{y_{2}>1}+y_{2}^2\delta_{y_{2}<1}}{y_{2}(y_{1}+y_{2})},~ \ell_{n,2}= \frac{h^2\delta_{y_{1}>1}+y_{1}^2\delta_{y_{1}<1}}{y_{1}(y_{1}+y_{2})}, 
 \end{equation} 
 and
 \begin{equation} \nonumber 
    \mu_{n} = -\frac{\Delta_{1}h^2y_{1}^2y_{2}^2}{(y_{1}+y_{2})^4}+\frac{\Delta_{2}h^2y_{1}^2y_{2}^2}{(y_{1}+y_{2})^4}, ~~ 
    \sigma_{n}^2 = \frac{2h^2y_{1}^2y_{2}^2}{(y_{1}+y_{2})^4}+(\Delta_{1}y_{1}+\Delta_{2}y_{2})\frac{h^4y_{1}^2y_{2}^2}{(y_{1}+y_{2})^6}.
\end{equation}
\end{thm}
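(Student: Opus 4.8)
The plan is to derive everything from the F-matrix CLT already quoted, using the eigenvalue correspondence (\ref{en:lss-reg}). Taking $f_1(x)=x$ and $f_2(x)=1-x$ for $\mathbf{B}_n$, identity (\ref{en:lss-reg}) converts the problem into the F-matrix LSS for the test functions $g_1(x)=x/(d+x)$ and $g_2(x)=d/(d+x)=1-g_1(x)$ with $d=\alpha_n=n_2/n_1=y_1/y_2$. Since $g_2=1-g_1$ differs from $-g_1$ by a constant, and constants integrate to zero against the centered measure $d\tilde G_{n_1,n_2}$, the fluctuation of $\mathcal{P}_2$ is exactly the negative of that of $\mathcal{P}_1$; this is the analytic counterpart of (\ref{eq:minus}) and lets me treat only $\mathcal{P}_1$ and then read off the $\mathcal{P}_2$ statement by sign reversal. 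Asymptotic Gaussianity is inherited directly from the cited F-matrix CLT, so the entire task reduces to evaluating three deterministic quantities: the leading centering $\ell_{n,1}=\int x\, d\mathbf{F}_{y_1,y_2}(x)$, the mean correction $\mu_n=\mathrm{E}X_{g_1}$, and the variance $\sigma_n^2=\mathrm{Var}(X_{g_1})$.

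The computational engine is the observation that the support parametrization $x=|1+h\xi|^2/(1-y_2)^2$ turns $g_1$ into a rational function of $\xi$ on $|\xi|=1$. Writing $|1+h\xi|^2=(1+h\xi)(1+h\xi^{-1})$ and using $h^2=y_1+y_2-y_1y_2$, a short factorization gives
\[
g_1\!\left(\frac{|1+h\xi|^2}{(1-y_2)^2}\right)=\frac{(\xi+h)(\xi+h^{-1})}{(\xi+h/y_2)(\xi+y_2/h)}.
\]
With $g_1$ in this form, each contour integral in the mean and covariance formulas becomes a residue computation. The crucial point is that the pole at $\xi=-y_2/h$ lies inside the unit disk exactly when $y_2<1$, while $\xi=-h/y_2$ lies inside exactly when $y_2>1$, because $h^2-y_2^2=(1-y_2)(y_1+y_2)$; this dichotomy is precisely the source of the indicators $\delta_{y_2\gtrless 1}$ in $\ell_{n,1}$ (and, symmetrically, $\delta_{y_1\gtrless 1}$ in $\ell_{n,2}$). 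I would first compute $\ell_{n,1}$ by integrating $x$ against the LSD, or equivalently by a single residue, taking care to include or exclude the point masses of $\mathbf{F}_{y_1,y_2}$ at $0$ and $1$ that appear when $y_1>1$ or $y_2>1$; these boundary atoms are exactly what the restrictions $\sum_{\lambda<1}$ and $\sum_{\lambda>0}$ in (\ref{eqn:newstat}) remove, so they must be matched against the centering.

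Next I would evaluate $\mu_n$ by substituting the rational form of $g_1$ into the three mean integrals and summing residues at the interior poles ($\xi=-y_2/h$ or $-h/y_2$ according to the case, together with the explicit poles of the bracketed kernels), then passing to the limit $r\downarrow 1$; the two $\Delta$-dependent terms produce the $\Delta_1,\Delta_2$ contributions in $\mu_n$, while the leading term supplies the remaining piece. For $\sigma_n^2$ I set $f_i=f_j=g_1$ in the covariance formula: the two single-integral product terms give the $(\Delta_1 y_1+\Delta_2 y_2)$ part of $\sigma_n^2$, and the leading double integral—evaluated by doing the inner $\xi_1$-integral against the kernel $(\xi_1-r\xi_2)^{-2}$ and then the outer $\xi_2$-integral before letting $r\downarrow 1$—yields the $2h^2y_1^2y_2^2/(y_1+y_2)^4$ term. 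Collecting $\ell_{n,1}$, $\mu_n$, $\sigma_n^2$ and invoking the sign relation then gives both displays in (\ref{eqn:clt}).

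The main obstacle I anticipate is the bookkeeping in the residue calculus: correctly tracking which poles cross the unit circle as $y_1$ or $y_2$ passes through $1$, and reconciling the resulting case split with the atoms of the limiting spectral distribution, so that the deterministic centering $\ell_{n,1}$ emerges with the stated indicator structure. The double contour integral for the variance, with its $r\downarrow 1$ limit and double pole, is the other delicate piece, though it follows the standard template for such F-matrix LSS computations.
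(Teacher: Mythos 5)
Your computational core is sound and matches the paper's calculations closely: the factorization $g_1\bigl(|1+h\xi|^2/(1-y_2)^2\bigr)=\frac{(\xi+h)(\xi+1/h)}{(\xi+h/y_2)(\xi+y_2/h)}$ is exactly the rational kernel $\frac{z}{\alpha+z}$ that appears in the paper's residue computations, the identity $h^2-y_2^2=(1-y_2)(y_1+y_2)$ correctly drives the pole dichotomy and hence the indicators in $\ell_{n,1}$, $\ell_{n,2}$, the sign relation between $\mathcal{P}_1$ and $\mathcal{P}_2$ is the paper's \eqref{eq:minus}, and your nested evaluation of the double contour integral (inner integral against $(\xi_1-r\xi_2)^{-2}$, then outer, then $r\downarrow 1$) is precisely how the paper handles the leading variance term with $r_1<r_2$.

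The genuine gap is foundational: you propose to ``derive everything from the F-matrix CLT already quoted,'' but the F-matrix $S_{n_1}S_{n_2}^{-1}$ does not exist when $y_2>1$ (then $S_{n_2}$ is singular), and the Bai--Silverstein CLT you would invoke is proved only for $\gamma_2\in(0,1)$. The theorem, however, explicitly covers $y_2>1$, including the regime $y_1>1$ \emph{and} $y_2>1$ simultaneously (e.g., $(n_1,n_2,p)=(45,40,50)$ in the simulations), where neither $S_{n_1}S_{n_2}^{-1}$ nor $S_{n_2}S_{n_1}^{-1}$ exists, so no swap-the-samples trick rescues the reduction; the correspondence \eqref{en:lss-reg} itself degenerates there, since the $p-n_2$ unit eigenvalues of $\mathbf{B}_n$ correspond to no finite F-matrix eigenvalue. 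This is exactly why the paper's appendix does not argue through the F-matrix at all: it invokes Lemma A.1 of \citet{Zhang:2019}, a CLT for LSS of the Beta matrix $\mathbf{B}_n$ itself, valid for $\gamma_1,\gamma_2\in(0,1)\cup(1,\infty)$ under $\lim p/(n_1+n_2)<1$, and the $y_2\gtrless 1$ case split enters through the choice of branch $m_3(z)=-\frac{1+h/(r\xi)}{1-y_2}$ versus $m_3(z)=-\frac{1+hr\xi}{1-y_2}$ rather than through which F-matrix formula applies. You would need to replace your starting point by this Beta-matrix CLT (or prove the needed extension of the F-matrix CLT) for the proof to cover the stated hypotheses. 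One smaller correction: in the mean, the non-cumulant (leading) contour integral vanishes identically --- the paper verifies \eqref{eq:mean_1} $=0$ via the relations $y_1^2-h^2=(y_1+y_2)(y_1-1)$ and $y_1-h^2=y_2(y_1-1)$ --- so there is no ``remaining piece'' from the leading term; $\mu_n$ is driven entirely by the fourth cumulants $\Delta_1,\Delta_2$.
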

\begin{proof} 
The proof is given in Appendix. 
\end{proof}

\section{Numerical study}\label{simul}

In this section, we numerically compare the powers of the proposed $\mathcal{K}$ to the modified LRTs $\mathcal{L}$ and $\tilde{\mathcal{L}}$ by  \citet{Zhang:2019} under various choices of sample sizes, dimensions, and distributions. In the study, we assume $\boldsymbol{\mu}_{l}=0$ for $l=1,2$ and set $\Sigma_{1}=(1+a/n_{1})\Sigma_{2}$, where $a$ is a constant.  We consider four cases following \citet{Zhang:2019}: 
\begin{itemize} 
\item Case 1:  $\mathbf{x}^{(1)}$ and $\mathbf{x}^{(2)}$ are from the standard normal distributed and  $\Sigma_{2}={\rm I}_{p}$;  
\item Case 2:  $\mathbf{x}^{(1)}$ and $\mathbf{x}^{(2)}$ are from the uniform distribution ${\rm U}(-\sqrt{3},\sqrt{3})$ and
  $\Sigma_{2}={\rm I}_{p}$; 
\item Case 3:  $\mathbf{x}^{(1)}$ and $\mathbf{x}^{(2)}$ are from the uniform distribution  ${\rm U}(-\sqrt{3},\sqrt{3})$ and
  $\Sigma_{2}={\rm diag}(p^2,{\bf 1}_{1 \times (p-1)})$;
 \item Case 4: $\mathbf{x}^{(1)}$ and $\mathbf{x}^{(2)}$ are from the uniform distribution  ${\rm U}(-\sqrt{3},\sqrt{3})$ and 
 $\Sigma_{2}=(0.5{\rm I}_{p}+0.5 {\bf 1}_{p \times p})$. 
\end{itemize} 
For each case, we consider $16$ choices of $(n_1,n_2,p)$, four cases are for each $(y_{1}>1, y_2>1)$,  $(y_1>1, y_2<1)$,$(y_1<1, y_2>1)$, 
$(y_1<1,y_2<1)$. Four choices are considered for $a$, $a=0,3,7,10$, where $a=0$ is the choice of the null hypothesis. 
 In all cases, we assume that forth moments of $\mathbf{x}^{(1)}$ and $\mathbf{x}^{(2)}$ are known. In case 1, $\Delta_{1}=\Delta_{2}=0$ and in case 2,3, and 4, $\Delta_{1}=\Delta_{2}=-1.2$.
For each 
combination, we generate $1000$ data sets and the powers (the sizes) are evaluated by counting the number of rejected data sets. 
The results are reported in  Tables \ref{tab:table1} to \ref{tab:table4}. In the tables, $\mathcal{L}$ and $\tilde{\mathcal{L}}$ stand for two modified LRTs 
by \citet{Zhang:2019} and $\mathcal{K}$ is our statistic in Theorem \ref{thm:1}. Moreover, Figures \ref{fig:power1} - \ref{fig:power4}
 display (empirical) powers with respect to different choices of $a$ for the Cases 1 - 4.  We find that, in all cases considered, the power of $\mathcal{K}$ is higher than both $\mathcal{L}$ and $\tilde{\mathcal{L}}$.

\begin{table}[htb!]
\resizebox{\textwidth}{!}{%
\footnotesize
\begin{tabular}{ccccccccc}
\hline
\multirow{2}{*}{\begin{tabular}[c]{@{}c@{}}\thead{(i)}\end{tabular}} & \multicolumn{4}{c}{$(n_{1},n_{2},p) = (45,40,50)$}       & \multicolumn{4}{c}{$(n_{1},n_{2},p) = (90,80,100)$}     \\ \cline{2-9} 
                  & a=0 & a=3 & a=7 & a=10 & a=0 & a=3 & a=7 & a=10      \\ \hline
                  $\mathcal{L}$& 0.062 & 0.056(0.046) & 0.058(0.044) & 0.070(0.060) & 0.050 & 0.051(0.058) & 0.059(0.063) & 0.069(0.072)     \\ \hline
                  $\tilde{\mathcal{L}}$& 0.061 & 0.109(0.094) & 0.335(0.303) & 0.555(0.506) & 0.045 & 0.102(0.110) & 0.336(0.350) & 0.610(0.625)        \\ \hline
                  $\mathcal{K}$& 0.062 & \textbf{0.150}(0.141) & \textbf{0.568}(0.535) & \textbf{0.835}(0.821) & 0.054 & \textbf{0.129}(0.117) & \textbf{0.603}(0.590) & \textbf{0.869}(0.865)        \\ \hline
\multirow{2}{*}{
} & \multicolumn{4}{c}{$(n_{1},n_{2},p) = (180,160,200)$}       & \multicolumn{4}{c}{$(n_{1},n_{2},p) = (360,320,400)$} \\ \cline{2-9} 
                  & a=0 & a=3 & a=7 & a=10 & a=0 & a=3 & a=7 & a=10      \\ \hline
                  $\mathcal{L}$& 0.041 & 0.043(0.058) & 0.063(0.085) & 0.070(0.091) & 0.056 & 0.041(0.036) & 0.053(0.043) & 0.080(0.074)      \\ \hline
                  $\tilde{\mathcal{L}}$& 0.047 & 0.109(0.119) & 0.348(0.356) & 0.620(0.639) & 0.059 & 0.105(0.092) & 0.352(0.330) & 0.632(0.606)      \\ \hline
                  $\mathcal{K}$& 0.049 & \textbf{0.175}(0.182) & \textbf{0.589}(0.597) & \textbf{0.882}(0.886) & 0.051 & \textbf{0.188}(0.188) & \textbf{0.601}(0.600) & \textbf{0.891}(0.891)      \\ \hline
\multirow{2}{*}{\begin{tabular}[c]{@{}c@{}}\thead{ (ii) }\end{tabular}} & \multicolumn{4}{c}{$(n_{1},n_{2},p) = (36,50,45)$} & \multicolumn{4}{c}{$(n_{1},n_{2},p) =(72,100,90)$}       \\ \cline{2-9} 
                  & a=0 & a=3 & a=7 & a=10 & a=0 & a=3 & a=7 & a=10      \\ \hline
                  $\mathcal{L}$& 0.066 & 0.071(0.056) & 0.110(0.092) & 0.227(0.188) & 0.056 & 0.065(0.187) & 0.131(0.122) & 0.211(0.195)        \\ \hline
                  $\tilde{\mathcal{L}}$&  0.049 & 0.169(0.170) & 0.519(0.523) & 0.806(0.809) & 0.042 & 0.158(0.187) & 0.602(0.639) & 0.858(0.874)        \\ \hline
                  $\mathcal{K}$& 0.060 & \textbf{0.216}(0.187) & \textbf{0.738}(0.718) & \textbf{0.957}(0.952) & 0.046 & \textbf{0.224}(0.228) & \textbf{0.807}(0.814) & \textbf{0.974}(0.976)        \\ \hline
\multirow{2}{*}{
} & \multicolumn{4}{c}{$(n_{1},n_{2},p) =(144,200,180)$} & \multicolumn{4}{c}{$(n_{1},n_{2},p) =(288,400,360)$}       \\ \cline{2-9} 
                  & a=0 & a=3 & a=7 & a=10 & a=0 & a=3 & a=7 & a=10      \\ \hline
                  $\mathcal{L}$&  0.057 & 0.055(0.051) & 0.130(0.117) & 0.209(0.195) & 0.043 & 0.060(0.070) & 0.142(0.158) & 0.225(0.242)      \\ \hline
                  $\tilde{\mathcal{L}}$& 0.065 & 0.174(0.142) & 0.614(0.577) & 0.864(0.831) & 0.046 & 0.167(0.175) & 0.626(0.635) & 0.897(0.901)      \\ \hline
                  $\mathcal{K}$ & 0.060 & \textbf{0.234}(0.209) & \textbf{0.819}(0.795) & \textbf{0.979}(0.975) & 0.055 & \textbf{0.251}(0.240) & \textbf{0.827}(0.821) & \textbf{0.982}(0.982)      \\ \hline
\multirow{2}{*}{\begin{tabular}[c]{@{}c@{}}\thead{(iii)}\end{tabular}} & \multicolumn{4}{c}{$(n_{1},n_{2},p) =(50,36,45)$} & \multicolumn{4}{c}{$(n_{1},n_{2},p) =(100,72,90)$}      \\ \cline{2-9} 
                  & a=0 & a=3 & a=7 & a=10 & a=0 & a=3 & a=7 & a=10      \\ \hline
                  $\mathcal{L}$& 0.061 & 0.056(0.044) & 0.095(0.086) & 0.116(0.103) & 0.061 & 0.074(0.046) & 0.084(0.056) & 0.099(0.068)        \\ \hline
                  $\tilde{\mathcal{L}}$& 0.060 & 0.083(0.063) & 0.291(0.257) & 0.511(0.461) & 0.058 & 0.107(0.097) & 0.293(0.276) & 0.516(0.498)       \\ \hline
                  $\mathcal{K}$& 0.060 & \textbf{0.140}(0.123) & \textbf{0.494}(0.473) & \textbf{0.780}(0.758) & 0.057 & \textbf{0.153}(0.144) & \textbf{0.505}(0.489) & \textbf{0.822}(0.808)       \\ \hline
\multirow{2}{*}{
} & \multicolumn{4}{c}{$(n_{1},n_{2},p) =(200,144,180)$} & \multicolumn{4}{c}{$(n_{1},n_{2},p) =(400,288,360)$}       \\  \cline{2-9} 
                  & a=0 & a=3 & a=7 & a=10 & a=0 & a=3 & a=7 & a=10     \\ \hline
                  $\mathcal{L}$& 0.041 & 0.051(0.058) & 0.083(0.096) & 0.142(0.155) & 0.053 & 0.063(0.062) & 0.071(0.071) & 0.123(0.119)      \\ \hline
                  $\tilde{\mathcal{L}}$&  0.042 & 0.086(0.094) & 0.286(0.305) & 0.563(0.579) & 0.054 & 0.095(0.093) & 0.318(0.312) & 0.535(0.525)     \\ \hline
                  $\mathcal{K}$& 0.033 & \textbf{0.138}(0.176)  & \textbf{0.505}(0.565) & \textbf{0.825}(0.849) & 0.038 & \textbf{0.145}(0.171) & \textbf{0.576}(0.615) & \textbf{0.822}(0.848)      \\ \hline
\multirow{2}{*}{\begin{tabular}[c]{@{}c@{}}\thead{ (iv) }\end{tabular}} & \multicolumn{4}{c}{$(n_{1},n_{2},p) =(50,50,45)$} & \multicolumn{4}{c}{$(n_{1},n_{2},p) =(100,100,90)$}       \\ \cline{2-9} 
                  & a=0 & a=3 & a=7 & a=10 & a=0 & a=3 & a=7 & a=10      \\ \hline
                  $\mathcal{L}$& 0.061 & 0.059(0.042) & 0.047(0.036) & 0.061(0.043) & 0.063 & 0.051(0.046) & 0.387(0.384) & 0.053(0.047)       \\ \hline
                  $\tilde{\mathcal{L}}$& 0.059 & 0.108(0.090) & 0.345(0.314) & 0.592(0.555) & 0.053 & 0.116(0.113) & 0.387(0.384) & 0.636(0.632)        \\ \hline
                  $\mathcal{K}$& 0.048 & \textbf{0.146}(0.154) & \textbf{0.617}(0.624) & \textbf{0.892}(0.896) & 0.055 & \textbf{0.202}(0.189) & \textbf{0.666}(0.651) & \textbf{0.907}(0.902)      \\ \hline
\multirow{2}{*}{
} & \multicolumn{4}{c}{$(n_{1},n_{2},p) =(200,200,180)$} & \multicolumn{4}{c}{$(n_{1},n_{2},p) =(400,400,360)$}       \\  \cline{2-9} 
                  & a=0 & a=3 & a=7 & a=10 & a=0 & a=3 & a=7 & a=10      \\ \hline
                  $\mathcal{L}$& 0.042 & 0.060(0.069) & 0.059(0.064) & 0.047(0.056) & 0.044 & 0.053(0.056) & 0.058(0.064) & 0.49(0.054)      \\ \hline
                  $\tilde{\mathcal{L}}$& 0.049 & 0.112(0.114) & 0.403(0.406) & 0.656(0.658) & 0.046 & 0.119(0.130) & 0.391(0.408) & 0.686(0.700)      \\ \hline
                  $\mathcal{K}$& 0.049 & \textbf{0.187}(0.187) & \textbf{0.675}(0.678) & \textbf{0.917}(0.917) & 0.055 & \textbf{0.195}(0.187) & \textbf{0.694}(0.684) & \textbf{0.931}(0.927)      \\ \hline
\end{tabular}%
}
\caption{Case 1: Empirical sizes and powers. The results are based on the 5\% significance level. The power number in small bracket indicates the size corrected power based empirical 95\% quantile not asymptotic. (i) indicates the case of $y_{1}>1,y_{2}>1$, (ii) does $y_{1}>1,y_{2}<1$, (iii) does $y_{1}<1,y_{2}>1$, and (iv) does $y_{1}<1,y_{2}<1$.}
\label{tab:table1}
\end{table}

\begin{table}[htb!]
\resizebox{\textwidth}{!}{%
\footnotesize
\begin{tabular}{ccccccccc}
\hline
\multirow{2}{*}{\begin{tabular}[c]{@{}c@{}}\thead{ (i)}\end{tabular}} & \multicolumn{4}{c}{$(n_{1},n_{2},p)=(45,40,50)$}       & \multicolumn{4}{c}{$(n_{1},n_{2},p)=(90,80,100)$}       \\  \cline{2-9} 
                  & a=0 & a=3 & a=7 & a=10 & a=0 & a=3 & a=7 & a=10      \\ \hline
                  $\mathcal{L}$& 0.060 & 0.055(0.041) & 0.067(0.051) & 0.061(0.047) & 0.054 & 0.052(0.047) & 0.084(0.075) & 0.078(0.072)        \\ \hline
                  $\tilde{\mathcal{L}}$& 0.061 & 0.136(0.120) & 0.371(0.328) & 0.602(0.572) & 0.064 & 0.129(0.105) & 0.396(0.342) & 0.666(0.623)       \\ \hline
                  $\mathcal{K}$& 0.054 & \textbf{0.203}(0.190) & \textbf{0.700}(0.686) & \textbf{0.917}(0.913) & 0.052 & \textbf{0.207}(0.207) & \textbf{0.732}(0.730) & \textbf{0.951}(0.950)        \\ \hline
\multirow{2}{*}{
} & \multicolumn{4}{c}{$(n_{1},n_{2},p)=(180,160,200)$}       & \multicolumn{4}{c}{$(n_{1},n_{2},p)=(360,320,400)$}       \\  \cline{2-9} 
                  & a=0 & a=3 & a=7 & a=10 & a=0 & a=3 & a=7 & a=10      \\ \hline
                  $\mathcal{L}$& 0.045 & 0.050(0.068) & 0.064(0.081) & 0.098(0.113) & 0.067 & 0.054(0.040) & 0.072(0.056) & 0.083(0.062)      \\ \hline
                  $\tilde{\mathcal{L}}$& 0.047 & 0.122(0.130) & 0.399(0.410) & 0.692(0.712) & 0.061 & 0.114(0.091) & 0.392(0.355) & 0.711(0.683)      \\ \hline
                  $\mathcal{K}$& 0.055 & \textbf{0.199}(0.194) & \textbf{0.742}(0.731) & \textbf{0.953}(0.943) & 0.050 & \textbf{0.212}(0.214) & \textbf{0.754}(0.758) & \textbf{0.964}(0.965)      \\ \hline
\multirow{2}{*}{\begin{tabular}[c]{@{}c@{}}\thead{(ii)}\end{tabular}} & \multicolumn{4}{c}{$(n_{1},n_{2},p)=(36,50,45)$} & \multicolumn{4}{c}{$(n_{1},n_{2},p)=(72,100,90)$}       \\  \cline{2-9} 
                  & a=0 & a=3 & a=7 & a=10 & a=0 & a=3 & a=7 & a=10      \\ \hline
                  $\mathcal{L}$& 0.060 & 0.059(0.043) & 0.132(0.116) & 0.160(0.133) & 0.066 & 0.057(0.044) & 0.100(0.078) & 0.169(0.129)        \\ \hline
                  $\tilde{\mathcal{L}}$& 0.047 & 0.177(0.184) & 0.646(0.660) & 0.857(0.858) & 0.044 & 0.168(0.201) & 0.681(0.713) & 0.919(0.933)        \\ \hline
                  $\mathcal{K}$& 0.051 & \textbf{0.283}(0.283) & \textbf{0.890}(0.890) & \textbf{0.991}(0.991) & 0.048 & \textbf{0.308}(0.327) & \textbf{0.903}(0.911) & \textbf{0.999}(0.999)        \\ \hline
\multirow{2}{*}{
} & \multicolumn{4}{c}{$(n_{1},n_{2},p)=(144,200,180)$} & \multicolumn{4}{c}{$(n_{1},n_{2},p)=(288,400,360)$}       \\  \cline{2-9} 
                  & a=0 & a=3 & a=7 & a=10 & a=0 & a=3 & a=7 & a=10      \\ \hline
                  $\mathcal{L}$& 0.056 & 0.055(0.049) & 0.110(0.096) & 0.193(0.175) & 0.063 & 0.062(0.059) & 0.095(0.087) & 0.164(0.148)      \\ \hline
                  $\tilde{\mathcal{L}}$& 0.062 & 0.183(0.176) & 0.684(0.665) & 0.932(0.919) & 0.037 & 0.165(0.191) & 0.692(0.731) & 0.937(0.944)      \\ \hline
                  $\mathcal{K}$ & 0.055 & \textbf{0.321}(0.314) & \textbf{0.914}(0.907) & \textbf{0.998}(0.997) & 0.050 & \textbf{0.310}(0.310) & \textbf{0.922}(0.924) & \textbf{0.999}(0.999)      \\ \hline
\multirow{2}{*}{\begin{tabular}[c]{@{}c@{}}\thead{(iii)}\end{tabular}} & \multicolumn{4}{c}{$(n_{1},n_{2},p)=(50,36,45)$} & \multicolumn{4}{c}{$(n_{1},n_{2},p)=(100,72,90)$}       \\  \cline{2-9} 
                  & a=0 & a=3 & a=7 & a=10 & a=0 & a=3 & a=7 & a=10      \\ \hline
                  $T$& 0.057 & 0.066(0.057) & 0.096(0.088) & 0.119(0.104) & 0.053 & 0.053(0.052) & 0.104(0.103) & 0.129(0.127)        \\ \hline
                  $\tilde{T}$& 0.057 & 0.106(0.099) & 0.331(0.316) & 0.559(0.543) & 0.059 & 0.090(0.079) & 0.352(0.314) & 0.601(0.552)        \\ \hline
                  $\mathcal{K}$& 0.048 & \textbf{0.162}(0.176) & \textbf{0.618}(0.640) & \textbf{0.899}(0.909) & 0.040 & \textbf{0.181}(0.199) & \textbf{0.667}(0.688) & \textbf{0.927}(0.933)      \\ \hline
\multirow{2}{*}{
} & \multicolumn{4}{c}{$(n_{1},n_{2},p)=(200,144,180)$} & \multicolumn{4}{c}{$(n_{1},n_{2},p)=(400,288,360)$}       \\ \cline{2-9} 
                  & a=0 & a=3 & a=7 & a=10 & a=0 & a=3 & a=7 & a=10     \\ \hline
                  $\mathcal{L}$& 0.053 & 0.066(0.066) & 0.089(0.088) & 0.129(0.127) & 0.052 & 0.057(0.057) & 0.091(0.091) & 0.128(0.128)     \\ \hline
                  $\tilde{\mathcal{L}}$&  0.052 & 0.124(0.111) & 0.350(0.334) & 0.600(0.586) & 0.046 & 0.104(0.109) & 0.369(0.380) & 0.605(0.618)      \\ \hline
                  $\mathcal{K}$&  0.039 & \textbf{0.190}(0.214) & \textbf{0.685}(0.718) & \textbf{0.928}(0.941) & 0.049 & \textbf{0.199}(0.199) & \textbf{0.689}(0.691) & \textbf{0.926}(0.928)      \\ \hline
\multirow{2}{*}{\begin{tabular}[c]{@{}c@{}}\thead{(iv)}\end{tabular}} & \multicolumn{4}{c}{$(n_{1},n_{2},p)=(50,50,45)$} & \multicolumn{4}{c}{$(n_{1},n_{2},p)=(100,100,90)$}       \\ \cline{2-9} 
                  & a=0 & a=3 & a=7 & a=10 & a=0 & a=3 & a=7 & a=10      \\ \hline
                  $\mathcal{L}$& 0.040 & 0.051(0.057) & 0.049(0.057) & 0.050(0.056) & 0.048 & 0.053(0.054) & 0.061(0.063) & 0.056(0.059)      \\ \hline
                  $\tilde{\mathcal{L}}$&  0.046 & 0.133(0.133) & 0.409(0.435) & 0.656(0.677) & 0.046 & 0.108(0.113) & 0.421(0.434) & 0.707(0.713)     \\ \hline
                  $\mathcal{K}$& 0.052 & \textbf{0.254}(0.250) & \textbf{0.807}(0.806) & \textbf{0.970}(0.970) & 0.043 & \textbf{0.222}(0.240) & \textbf{0.814}(0.840) & \textbf{0.981}(0.983)        \\ \hline
\multirow{2}{*}{
} & \multicolumn{4}{c}{$(n_{1},n_{2},p)=(200,200,180)$} & \multicolumn{4}{c}{$(n_{1},n_{2},p)=(400,400,360)$}       \\ \cline{2-9} 
                  & a=0 & a=3 & a=7 & a=10 & a=0 & a=3 & a=7 & a=10      \\ \hline
                  $\mathcal{L}$& 0.051 & 0.052(0.050) & 0.055(0.054) & 0.050(0.050) & 0.050 & 0.061(0.062) & 0.050(0.050) & 0.052(0.052)      \\ \hline
                  $\tilde{\mathcal{L}}$&  0.050 & 0.130(0.133) & 0.458(0.460) & 0.720(0.722) & 0.040 & 0.117(0.127) & 0.448(0.462) & 0.741(0.754)     \\ \hline
                  $\mathcal{K}$& 0.047 & \textbf{0.259}(0.268) & \textbf{0.833}(0.846) & \textbf{0.984}(0.986) & 0.058 & \textbf{0.252}(0.241) & \textbf{0.836}(0.827) & \textbf{0.994}(0.992)      \\ \hline
\end{tabular}%
}
\caption{Case 2: Empirical sizes and powers. The results are based on the 5\% significance level. The power number in small bracket indicates the size corrected power based empirical 95\% quantile not asymptotic. (i) indicates the case of $y_{1}>1,y_{2}>1$, (ii) does $y_{1}>1,y_{2}<1$, (iii) does $y_{1}<1,y_{2}>1$, and (iv) does $y_{1}<1,y_{2}<1$.}
\label{tab:table2}
\end{table}

\begin{table}[htb!]
\resizebox{\textwidth}{!}{%
\footnotesize
\begin{tabular}{ccccccccc}
\hline
\multirow{2}{*}{\begin{tabular}[c]{@{}c@{}}\thead{(i)}\end{tabular}} & \multicolumn{4}{c}{$(n_{1},n_{2},p)=(45,40,50)$}       & \multicolumn{4}{c}{$(n_{1},n_{2},p)=(90,80,100)$}       \\ \cline{2-9} 
                  & a=0 & a=3 & a=7 & a=10 & a=0 & a=3 & a=7 & a=10      \\ \hline
                  $\mathcal{L}$&  0.055 & 0.050(0.039) & 0.067(0.061) & 0.066(0.057) & 0.053 & 0.061(0.059) & 0.061(0.055) & 0.093(0.088)        \\ \hline
                  $\tilde{\mathcal{L}}$& 0.060 & 0.112(0.102) & 0.368(0.354) & 0.586(0.570) & 0.050 & 0.124(0.131) & 0.372(0.379) & 0.645(0.651)        \\ \hline
                  $\mathcal{K}$& 0.049 & \textbf{0.185}(0.194) & \textbf{0.717}(0.740) & \textbf{0.915}(0.923) & 0.054 & \textbf{0.220}(0.215) & \textbf{0.711}(0.706) & \textbf{0.947}(0.945)        \\ \hline
\multirow{2}{*}{
} & \multicolumn{4}{c}{$(n_{1},n_{2},p)=(180,160,200)$}       & \multicolumn{4}{c}{$(n_{1},n_{2},p)=(360,320,400)$}       \\  \cline{2-9} 
                  & a=0 & a=3 & a=7 & a=10 & a=0 & a=3 & a=7 & a=10      \\ \hline
                  $\mathcal{L}$& 0.035 & 0.075(0.090) & 0.070(0.089) & 0.087(0.105) & 0.039 & 0.057(0.072) & 0.066(0.076) & 0.084(0.096)     \\ \hline
                  $\tilde{\mathcal{L}}$& 0.043 & 0.113(0.122) & 0.408(0.423) & 0.693(0.705) & 0.054 & 0.111(0.102) & 0.394(0.378) & 0.697(0.678)      \\ \hline
                  $\mathcal{K}$& 0.050 & \textbf{0.186}(0.186) & \textbf{0.738}(0.738) & \textbf{0.955}(0.955) & 0.041 & \textbf{0.199}(0.236) & \textbf{0.741}(0.765) & \textbf{0.961}(0.967)      \\ \hline
\multirow{2}{*}{\begin{tabular}[c]{@{}c@{}}\thead{(ii)}\end{tabular}} & \multicolumn{4}{c}{$(n_{1},n_{2},p)=(36,50,45)$} & \multicolumn{4}{c}{$(n_{1},n_{2},p)=(72,100,90)$}       \\  \cline{2-9} 
                  & a=0 & a=3 & a=7 & a=10 & a=0 & a=3 & a=7 & a=10      \\ \hline
                  $\mathcal{L}$&  0.070 & 0.054(0.043) & 0.104(0.081) & 0.165(0.116) & 0.057 & 0.048(0.043) & 0.108(0.099) & 0.175(0.163)        \\ \hline
                  $\tilde{\mathcal{L}}$& 0.056 & 0.170(0.159) & 0.631(0.619) & 0.859(0.853) & 0.050 & 0.185(0.189) & 0.654(0.660) & 0.921(0.921)        \\ \hline
                  $\mathcal{K}$& 0.056 & \textbf{0.280}(0.268) & \textbf{0.880}(0.869) & \textbf{0.979}(0.979) & 0.052 & \textbf{0.309}(0.298) & \textbf{0.902}(0.900) & \textbf{0.992}(0.992)        \\ \hline
\multirow{2}{*}{
} & \multicolumn{4}{c}{$(n_{1},n_{2},p)=(144,200,180)$} & \multicolumn{4}{c}{$(n_{1},n_{2},p)=(288,400,360)$}       \\  \cline{2-9} 
                  & a=0 & a=3 & a=7 & a=10 & a=0 & a=3 & a=7 & a=10      \\ \hline
                  $\mathcal{L}$& 0.054 & 0.062(0.054) & 0.095(0.091) & 0.179(0.169) & 0.054 & 0.058(0.055) & 0.105(0.103) & 0.160(0.156)      \\ \hline
                  $\tilde{\mathcal{L}}$& 0.044 & 0.183(0.192) & 0.691(0.700) & 0.927(0.928) & 0.039 & 0.182(0.199) & 0.669(0.693) & 0.934(0.947)      \\ \hline
                  $\mathcal{K}$ &  0.046 & \textbf{0.295}(0.307) & \textbf{0.902}(0.915) & \textbf{0.999}(0.999) & 0.047 & \textbf{0.328}(0.346) & \textbf{0.923}(0.935) & \textbf{0.998}(0.998)      \\ \hline
\multirow{2}{*}{\begin{tabular}[c]{@{}c@{}}\thead{(iii)}\end{tabular}} & \multicolumn{4}{c}{$(n_{1},n_{2},p)=(50,36,45)$} & \multicolumn{4}{c}{$(n_{1},n_{2},p)=(100,72,90)$}       \\  \cline{2-9} 
                  & a=0 & a=3 & a=7 & a=10 & a=0 & a=3 & a=7 & a=10      \\ \hline
                  $\mathcal{L}$& 0.060 & 0.056(0.051) & 0.088(0.079) & 0.116(0.107) & 0.050 & 0.076(0.076) & 0.109(0.110) & 0.116(0.116)        \\ \hline
                  $\tilde{\mathcal{L}}$&0.054 & 0.092(0.074) & 0.324(0.301) & 0.558(0.541) & 0.045 & 0.112(0.132) & 0.354(0.382) & 0.592(0.625)        \\ \hline
                  $\mathcal{K}$& 0.050 & \textbf{0.155}(0.155) & \textbf{0.645}(0.645) & \textbf{0.903}(0.903) & 0.044 & \textbf{0.167}(0.187) & \textbf{0.659}(0.687) & \textbf{0.922}(0.935)        \\ \hline
\multirow{2}{*}{
} & \multicolumn{4}{c}{$(n_{1},n_{2},p)=(200,144,180)$} & \multicolumn{4}{c}{$(n_{1},n_{2},p)=(400,288,360)$}       \\  \cline{2-9} 
                  & a=0 & a=3 & a=7 & a=10 & a=0 & a=3 & a=7 & a=10     \\ \hline
                  $\mathcal{L}$& 0.061 & 0.053(0.046) & 0.079(0.065) & 0.126(0.113) & 0.043 & 0.057(0.060) & 0.082(0.091) & 0.136(0.147)      \\ \hline
                  $\tilde{\mathcal{L}}$& 0.065 & 0.107(0.085) & 0.334(0.299) & 0.605(0.565) & 0.043 & 0.096(0.109) & 0.327(0.353) & 0.626(0.642)      \\ \hline
                  $\mathcal{K}$& 0.047 & \textbf{0.176}(0.182) & \textbf{0.676}(0.692) & \textbf{0.923}(0.927) & 0.045 & \textbf{0.175}(0.185) & \textbf{0.673}(0.680) & \textbf{0.949}(0.949)      \\ \hline
\multirow{2}{*}{\begin{tabular}[c]{@{}c@{}}\thead{(iv)}\end{tabular}} & \multicolumn{4}{c}{$(n_{1},n_{2},p)=(50,50,45)$} & \multicolumn{4}{c}{$(n_{1},n_{2},p)=(100,100,90)$}       \\  \cline{2-9} 
                  & a=0 & a=3 & a=7 & a=10 & a=0 & a=3 & a=7 & a=10      \\ \hline
                  $\mathcal{L}$& 0.065 & 0.043(0.035) & 0.056(0.046) & 0.066(0.056) & 0.060 & 0.046(0.042) & 0.058(0.053) & 0.049(0.046)        \\ \hline
                  $\tilde{\mathcal{L}}$& 0.049 & 0.110(0.112) & 0.420(0.425) & 0.658(0.665) & 0.059 & 0.135(0.113) & 0.419(0.386) & 0.700(0.676)        \\ \hline
                  $\mathcal{K}$& 0.044 & \textbf{0.223}(0.236) & \textbf{0.780}(0.793) & \textbf{0.967}(0.974) & 0.050 & \textbf{0.239}(0.242) & \textbf{0.814}(0.817) & \textbf{0.979}(0.979)        \\ \hline
\multirow{2}{*}{
} & \multicolumn{4}{c}{$(n_{1},n_{2},p)=(200,200,180)$} & \multicolumn{4}{c}{$(n_{1},n_{2},p)(400,400,360)$}       \\  \cline{2-9} 
                  & a=0 & a=3 & a=7 & a=10 & a=0 & a=3 & a=7 & a=10      \\ \hline
                  $\mathcal{L}$& 0.049 & 0.045(0.046) & 0.046(0.046) & 0.054(0.057) & 0.041 & 0.057(0.068) & 0.047(0.064) & 0.061(0.073)      \\ \hline
                  $\tilde{\mathcal{L}}$&0.052 & 0.117(0.107) & 0.419(0.413) & 0.744(0.736) & 0.041 & 0.126(0.132) & 0.473(0.491) & 0.752(0.760)      \\ \hline
                  $\mathcal{K}$& 0.050 & \textbf{0.257}(0.265) & \textbf{0.854}(0.857) & \textbf{0.989}(0.989) & 0.047 & \textbf{0.240}(0.250) & \textbf{0.864}(0.868) & \textbf{0.993}(0.993)      \\ \hline
\end{tabular}%
}
\caption{Case 3: Empirical sizes and powers. The results are based on the 5\% significance level. The power number in small bracket indicates the size corrected power based empirical 95\% quantile not asymptotic. (i) indicates the case of $y_{1}>1,y_{2}>1$, (ii) does $y_{1}>1,y_{2}<1$, (iii) does $y_{1}<1,y_{2}>1$, and (iv) does $y_{1}<1,y_{2}<1$.}
\label{tab:table3}
\end{table}

\begin{table}[htb!]
\resizebox{\textwidth}{!}{%
\footnotesize
\begin{tabular}{ccccccccc}
\hline
\multirow{2}{*}{\begin{tabular}[c]{@{}c@{}}\thead{(i)}\end{tabular}} & \multicolumn{4}{c}{$(n_{1},n_{2},p)=(45,40,50)$}       & \multicolumn{4}{c}{$(n_{1},n_{2},p)=(90,80,100)$}       \\  \cline{2-9} 
                  & a=0 & a=3 & a=7 & a=10 & a=0 & a=3 & a=7 & a=10      \\ \hline
                  $\mathcal{L}$& 0.055 & 0.069(0.060) & 0.074(0.071) & 0.083(0.078) & 0.058 & 0.061(0.054) & 0.064(0.055) & 0.075(0.065)        \\ \hline
                  $\tilde{\mathcal{L}}$& 0.057 & 0.104(0.096) & 0.359(0.347) & 0.599(0.581) & 0.054 & 0.111(0.109) & 0.400(0.396) & 0.663(0.659)        \\ \hline
                  $\mathcal{K}$& 0.055 & \textbf{0.175}(0.166) & \textbf{0.705}(0.686) & \textbf{0.937}(0.933) & 0.058 & \textbf{0.179}(0.166) & \textbf{0.743}(0.728) & \textbf{0.956}(0.951)        \\ \hline
\multirow{2}{*}{
} & \multicolumn{4}{c}{$(n_{1},n_{2},p)=(180,160,200)$}       & \multicolumn{4}{c}{$(n_{1},n_{2},p)=(360,320,400)$}       \\  \cline{2-9} 
                  & a=0 & a=3 & a=7 & a=10 & a=0 & a=3 & a=7 & a=10      \\ \hline
                  $\mathcal{L}$& 0.054 & 0.061(0.055) & 0.058(0.054) & 0.091(0.083) & 0.042 & 0.044(0.048) & 0.081(0.099) & 0.085(0.097)      \\ \hline
                  $\tilde{\mathcal{L}}$& 0.059 & 0.118(0.115) & 0.374(0.361) & 0.665(0.652) & 0.046 & 0.122(0.142) & 0.387(0.423) &  0.687(0.722)      \\ \hline
                  $\mathcal{K}$& 0.066 & \textbf{0.195}(0.150) & \textbf{0.731}(0.690) & \textbf{0.953}(0.943) & 0.046 & \textbf{0.219}(0.230) & \textbf{0.735}(0.745) & \textbf{0.954}(0.956)      \\ \hline
\multirow{2}{*}{\begin{tabular}[c]{@{}c@{}}\thead{(ii)}\end{tabular}} & \multicolumn{4}{c}{$(n_{1},n_{2},p)=(36,50,45)$} & \multicolumn{4}{c}{$(n_{1},n_{2},p)=(72,100,90)$}       \\  \cline{2-9} 
                  & a=0 & a=3 & a=7 & a=10 & a=0 & a=3 & a=7 & a=10      \\ \hline
                  $\mathcal{L}$& 0.046 & 0.058(0.058) & 0.119(0.125) & 0.174(0.179) & 0.064 & 0.062(0.044) & 0.138(0.109) & 0.162(0.128)        \\ \hline
                  $\tilde{\mathcal{L}}$& 0.037 & 0.182(0.237) & 0.619(0.742) & 0.872(0.902) & 0.046 & 0.178(0.189) & 0.673(0.685) & 0.897(0.904)        \\ \hline
                  $\mathcal{K}$& 0.045 & \textbf{0.302}(0.315) & \textbf{0.869}(0.874) & \textbf{0.993}(0.993) & 0.052 & \textbf{0.295}(0.289) & \textbf{0.899}(0.898) & \textbf{0.995}(0.995)        \\ \hline
\multirow{2}{*}{
} & \multicolumn{4}{c}{$(n_{1},n_{2},p)=(144,200,180)$} & \multicolumn{4}{c}{$(n_{1},n_{2},p)=(288,400,360)$}       \\  \cline{2-9} 
                  & a=0 & a=3 & a=7 & a=10 & a=0 & a=3 & a=7 & a=10      \\ \hline
                  $\mathcal{L}$& 0.063 & 0.048(0.045) & 0.108(0.096) & 0.163(0.147) & 0.061 & 0.063(0.059) & 0.106(0.095) & 0.165(0.144)      \\ \hline
                  $\tilde{\mathcal{L}}$& 0.044 & 0.176(0.195) & 0.672(0.694) & 0.936(0.941) & 0.053 & 0.215(0.211) & 0.709(0.701) & 0.933(0.931)      \\ \hline
                  $\mathcal{K}$ & 0.049 & \textbf{0.322}(0.324) & \textbf{0.922}(0.922) & \textbf{0.997}(0.997) & 0.045 & \textbf{0.349}(0.378) & \textbf{0.921}(0.927) & \textbf{0.996}(0.997)      \\ \hline
\multirow{2}{*}{\begin{tabular}[c]{@{}c@{}}\thead{(iii)}\end{tabular}} & \multicolumn{4}{c}{$(n_{1},n_{2},p)=(50,36,45)$} & \multicolumn{4}{c}{$(n_{1},n_{2},p)=(100,72,90)$}       \\  \cline{2-9} 
                  & a=0 & a=3 & a=7 & a=10 & a=0 & a=3 & a=7 & a=10      \\ \hline
                  $\mathcal{L}$& 0.058 & 0.073(0.068) & 0.085(0.078) & 0.125(0.116) & 0.051 & 0.061(0.061) & 0.096(0.095) & 0.125(0.124)        \\ \hline
                  $\tilde{\mathcal{L}}$& 0.061 & 0.098(0.082) & 0.328(0.299) & 0.546(0.518) & 0.056 & 0.096(0.088) & 0.353(0.330) & 0.606(0.579)        \\ \hline
                  $\mathcal{K}$& 0.064 & \textbf{0.155}(0.143) & \textbf{0.626}(0.599) & \textbf{0.882}(0.860) & 0.054 & \textbf{0.166}(0.161) & \textbf{0.666}(0.654) & \textbf{0.914}(0.910)       \\ \hline
\multirow{2}{*}{
} & \multicolumn{4}{c}{$(n_{1},n_{2},p)=(200,144,180)$} & \multicolumn{4}{c}{$(n_{1},n_{2},p)=(400,288,360)$}       \\ \cline{2-9} 
                  & a=0 & a=3 & a=7 & a=10 & a=0 & a=3 & a=7 & a=10     \\ \hline
                  $\mathcal{L}$& 0.058 & 0.078(0.073) & 0.087(0.081) & 0.114(0.102) & 0.066 & 0.073(0.055) & 0.094(0.079) & 0.141(0.118)      \\ \hline
                  $\tilde{\mathcal{L}}$& 0.046 & 0.111(0.121) & 0.353(0.381) & 0.584(0.606) & 0.059 & 0.118(0.103) & 0.363(0.324) & 0.624(0.578)      \\ \hline
                  $\mathcal{K}$&  0.044 & \textbf{0.193}(0.213) & \textbf{0.684}(0.701) & \textbf{0.935}(0.939) & 0.039 & \textbf{0.179}(0.194) & \textbf{0.720}(0.744) & \textbf{0.934}(0.943)      \\ \hline
\multirow{2}{*}{\begin{tabular}[c]{@{}c@{}}\thead{ (iv) }\end{tabular}} & \multicolumn{4}{c}{$(n_{1},n_{2},p)=(50,50,45)$} & \multicolumn{4}{c}{$(n_{1},n_{2},p)=(100,100,90)$}       \\  \cline{2-9} 
                  & a=0 & a=3 & a=7 & a=10 & a=0 & a=3 & a=7 & a=10      \\ \hline
                  $\mathcal{L}$& 0.055 & 0.046(0.044) & 0.058(0.051) & 0.068(0.066) & 0.064 & 0.064(0.052) & 0.059(0.051) & 0.058(0.045)        \\ \hline
                  $\tilde{\mathcal{L}}$& 0.059 & 0.122(0.103) & 0.414(0.382) & 0.649(0.624) & 0.048 & 0.110(0.114) & 0.451(0.456) & 0.687(0.694)        \\ \hline
                  $\mathcal{K}$& 0.051 & \textbf{0.257}(0.256) & \textbf{0.777}(0.774) & \textbf{0.980}(0.979) & 0.039 & \textbf{0.234}(0.269) & \textbf{0.826}(0.843) & \textbf{0.983}(0.986)        \\ \hline
\multirow{2}{*}{
} & \multicolumn{4}{c}{$(n_{1},n_{2},p)=(200,200,180)$} & \multicolumn{4}{c}{$(n_{1},n_{2},p)=(400,400,360)$}       \\  \cline{2-9} 
                  & a=0 & a=3 & a=7 & a=10 & a=0 & a=3 & a=7 & a=10      \\ \hline
                  $\mathcal{L}$& 0.049 & 0.053(0.054) & 0.045(0.045) & 0.057(0.058) & 0.058 & 0.053(0.043) & 0.043(0.038) & 0.054(0.045)      \\ \hline
                  $\tilde{\mathcal{L}}$& 0.050 & 0.126(0.128) & 0.451(0.461) & 0.758(0.761) & 0.042 & 0.102(0.116) & 0.455(0.477) & 0.753(0.769)      \\ \hline
                  $\mathcal{K}$& 0.034 & \textbf{0.256}(0.313) & \textbf{0.828}(0.874) & \textbf{0.992}(0.995) & 0.048 & \textbf{0.251}(0.255) & \textbf{0.864}(0.867) & \textbf{0.988}(0.988)      \\ \hline
\end{tabular}%
}
\caption{Case 4: Empirical sizes and powers. The results are based on the 5\% significance level. The power number in small bracket indicates the size corrected power based empirical 95\% quantile not asymptotic. (i) indicates the case of $y_{1}>1,y_{2}>1$, (ii) does $y_{1}>1,y_{2}<1$, (iii) does $y_{1}<1,y_{2}>1$, and (iv) does $y_{1}<1,y_{2}<1$.}
\label{tab:table4}
\end{table}

\begin{figure} [htb!]
\centering
\caption{Power divergence in Case 1.T1,T2 stand for $\mathcal{L}$ and $\tilde{\mathcal{L}}$ respectively. $K$ 
stands for $\mathcal{K}$.}
\begin{tabular}{cccc}
\includegraphics[width=.4\textwidth, height=0.28\textwidth]{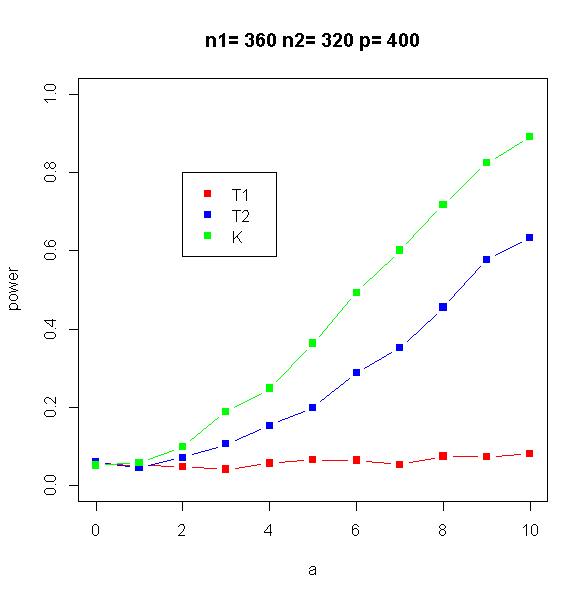} &
\includegraphics[width=.4\textwidth, height=0.3\textwidth]{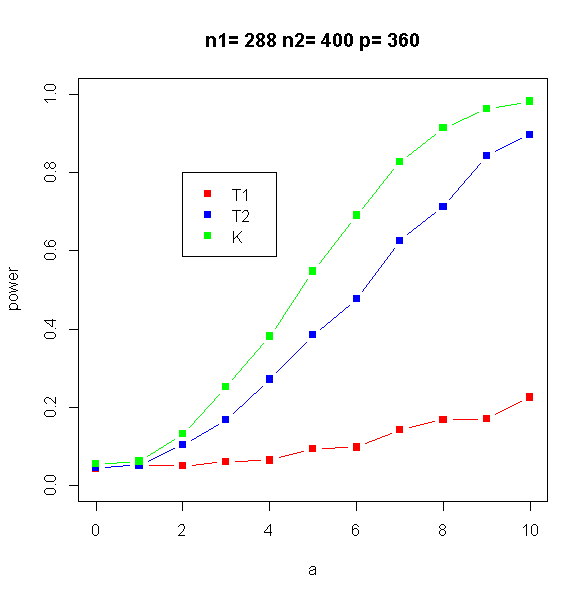}  \\
\end{tabular}
\begin{tabular}{cccc}
\includegraphics[width=.4\textwidth, height=0.28\textwidth]{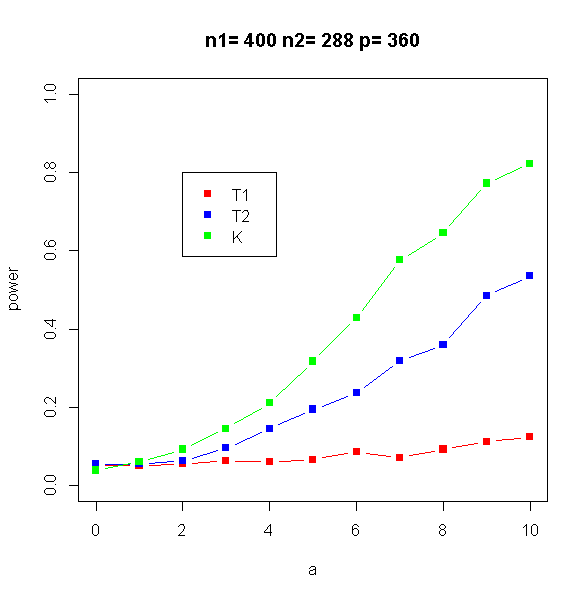} &
\includegraphics[width=.4\textwidth, height=0.28\textwidth]{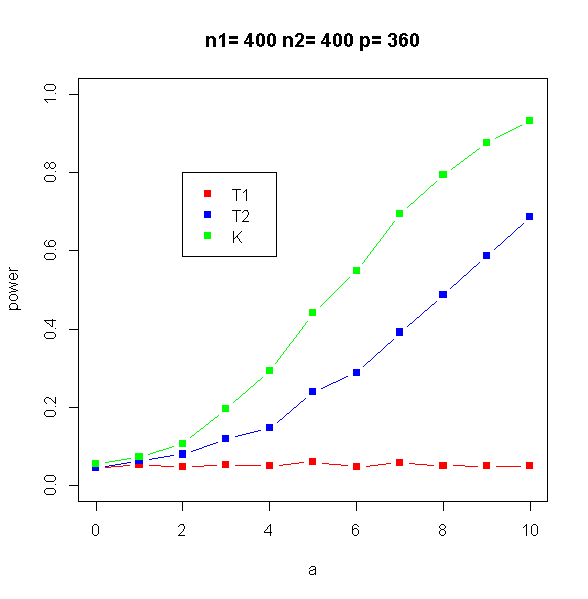} \\
\end{tabular}
\label{fig:power1}
\caption{Power divergence in Case 2. T1,T2 stand for $\mathcal{L}$ and $\tilde{\mathcal{L}}$ respectively. $K$ 
stands for $\mathcal{K}$.}
\begin{tabular}{cccc}
\includegraphics[width=.4\textwidth, height=0.28\textwidth]{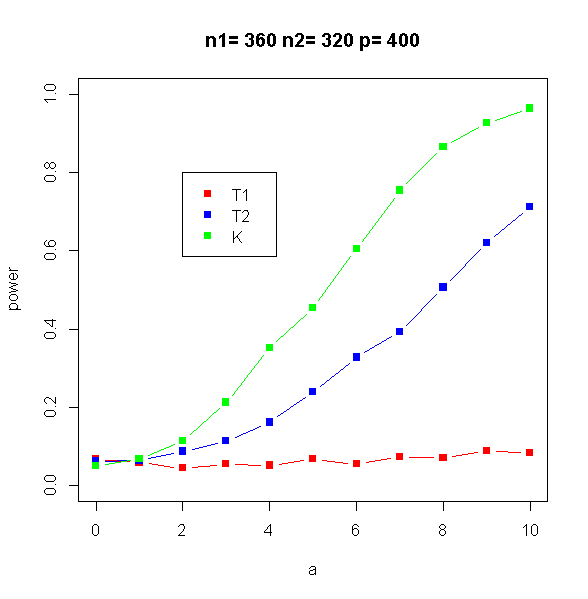} &
\includegraphics[width=.4\textwidth, height=0.28\textwidth]{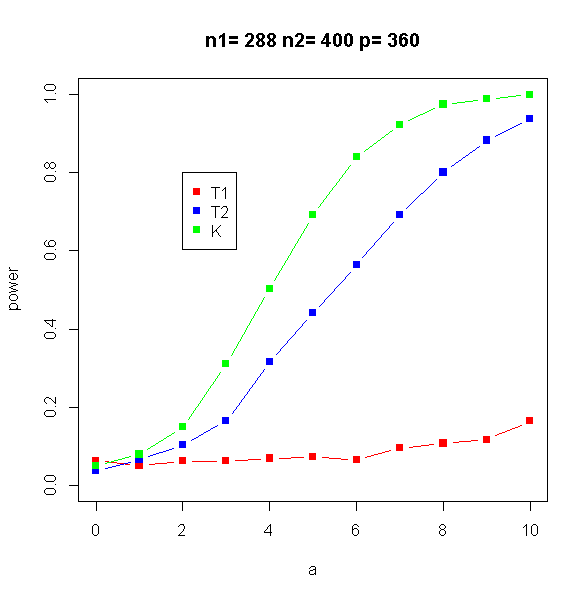}  \\
\end{tabular}
\begin{tabular}{cccc}
\includegraphics[width=.4\textwidth, height=0.28\textwidth]{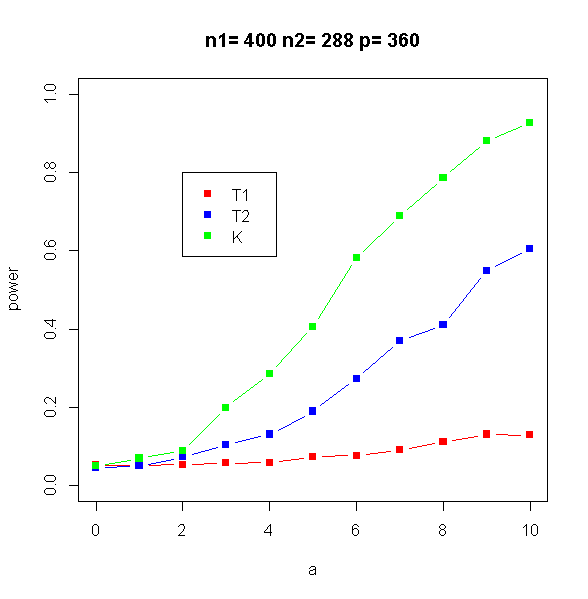} &
\includegraphics[width=.4\textwidth, height=0.29\textwidth]{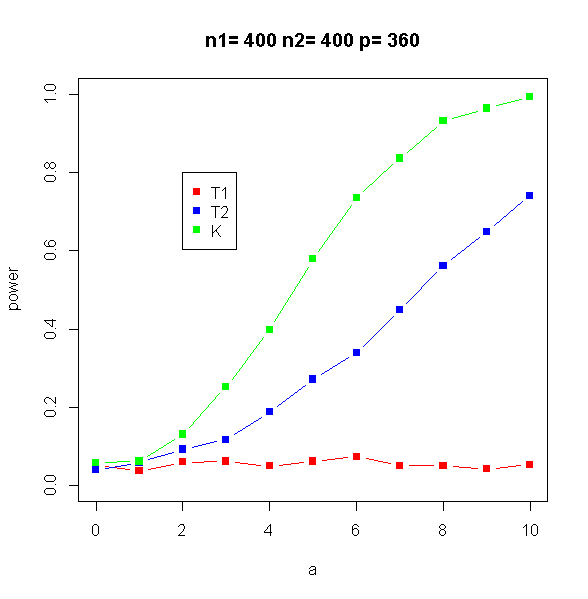} \\
\end{tabular}
\label{fig:power2}
\end{figure}

\begin{figure} [htb!]
\centering
\caption{Power divergence in Case 1.T1,T2 stand for $\mathcal{L}$ and $\tilde{\mathcal{L}}$ respectively. $K$ 
stands for $\mathcal{K}$.}
\begin{tabular}{cccc}
\includegraphics[width=.4\textwidth, height=0.28\textwidth]{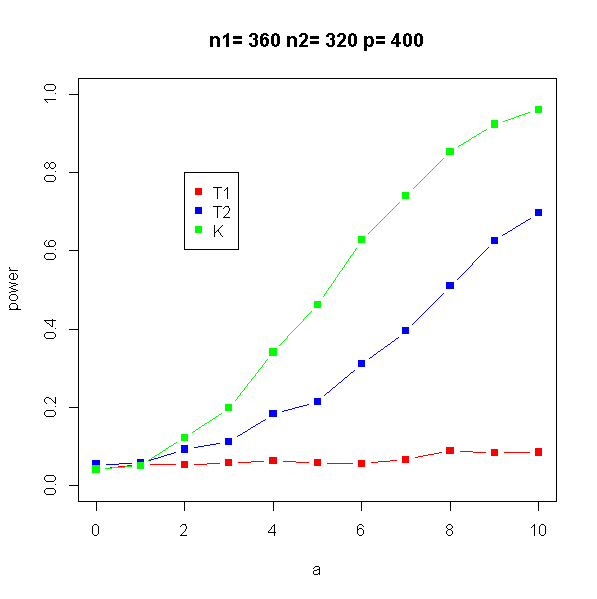} &
\includegraphics[width=.4\textwidth, height=0.28\textwidth]{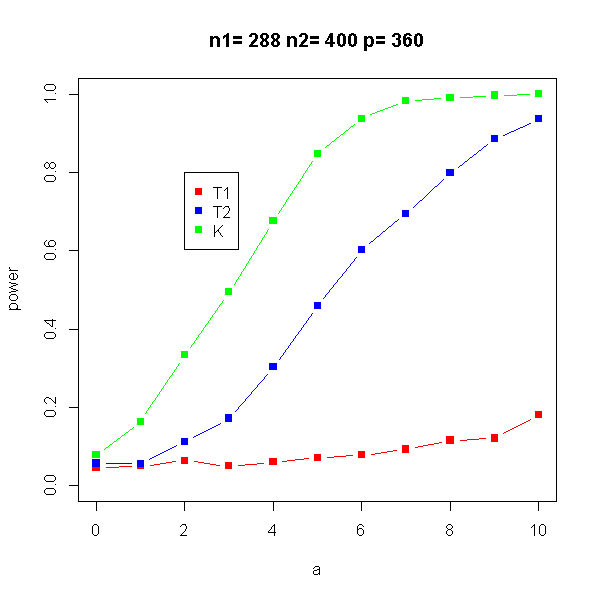}  \\
\end{tabular}
\begin{tabular}{cccc}
\includegraphics[width=.4\textwidth, height=0.28\textwidth]{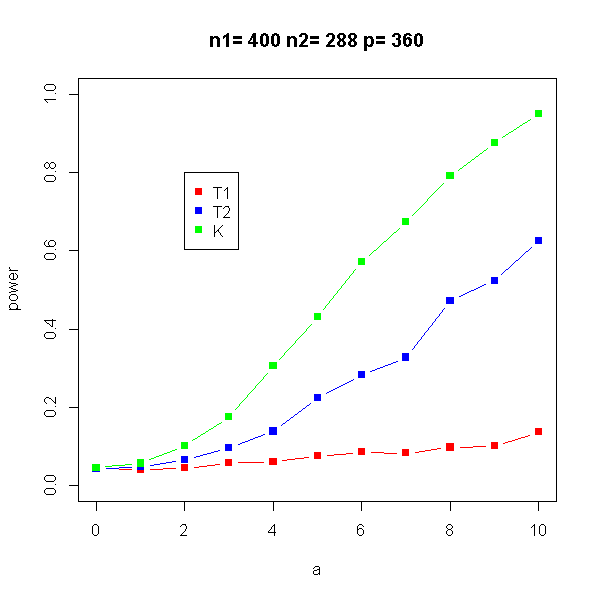} &
\includegraphics[width=.4\textwidth, height=0.28\textwidth]{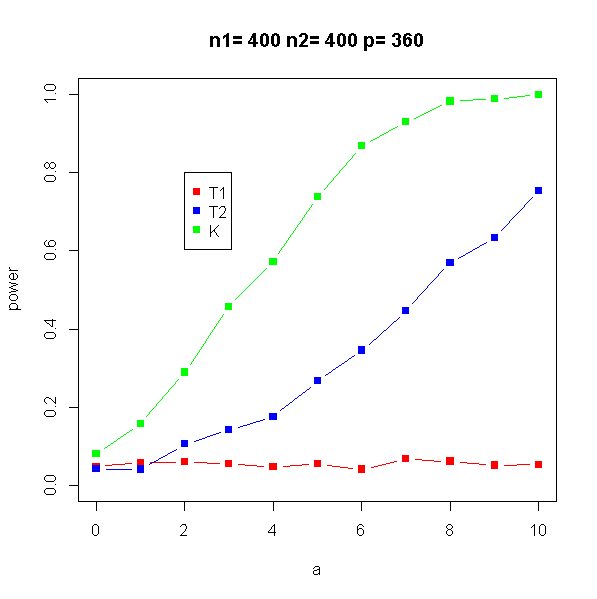} \\
\end{tabular}
\label{fig:power3}
\caption{Power divergence in Case 1.T1,T2 stand for $\mathcal{L}$ and $\tilde{\mathcal{L}}$ respectively. $K$ 
stands for $\mathcal{K}$.}
\begin{tabular}{cccc}
\includegraphics[width=.4\textwidth, height=0.28\textwidth]{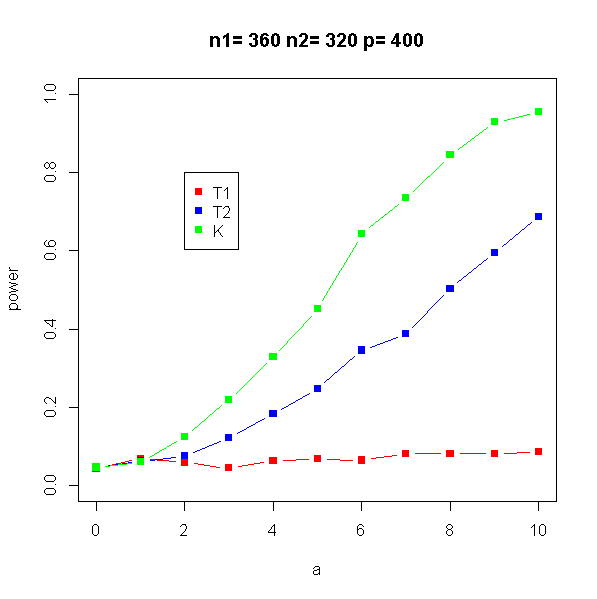} &
\includegraphics[width=.4\textwidth,height=0.28\textwidth]{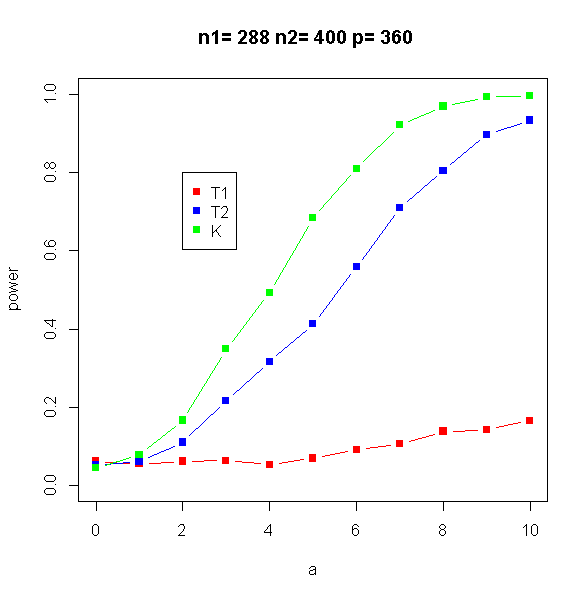}  \\
\end{tabular}
\begin{tabular}{cccc}
\includegraphics[width=.4\textwidth, height=0.28\textwidth]{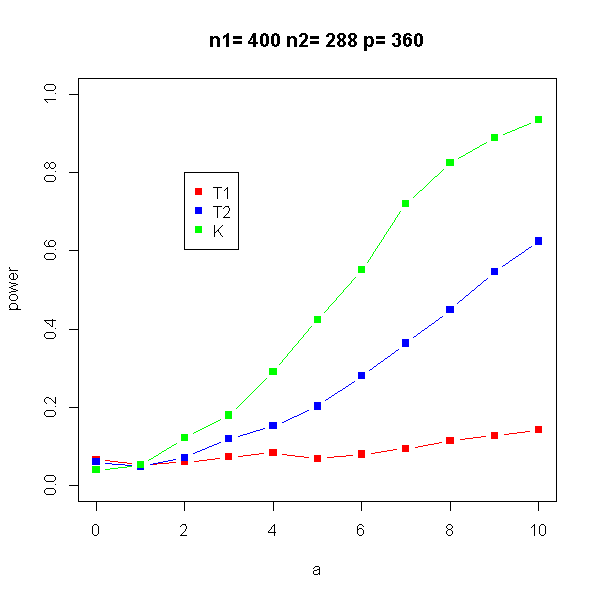} &
\includegraphics[width=.4\textwidth, height=0.28\textwidth]{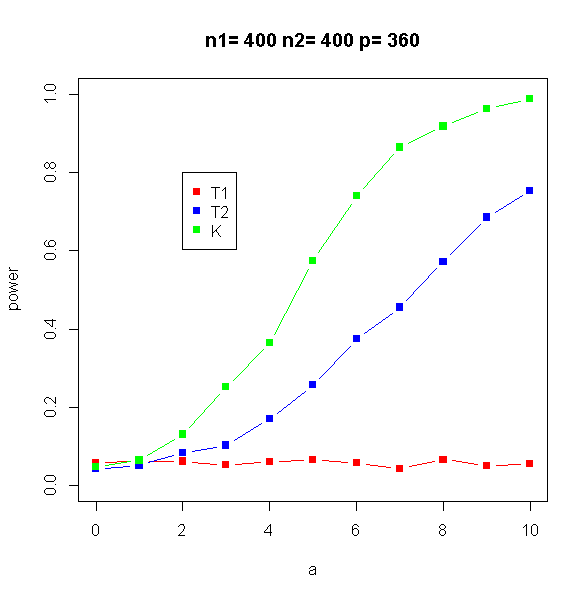} \\
\end{tabular}
\label{fig:power4}
\end{figure}

\clearpage 

\section{Conclusion} 

In this paper, we suggest an alternative invariant test, named as $\mathcal{K}$, to two modified LRTs by \citet{Zhang:2019} 
for  the equality of two large scale covariance matrices. It is based on  the sum of eigenvalues of $\mathbf{B}_{n}=n_{1}S_{n_{1}}(n_{1}S_{n_{1}}+n_{2}S_{n_{2}})^{-1}$. We find the asymptotic null distribution of  $\mathcal{K}$, when all $n_1,n_2,p$ approach $\infty$ 
at the same rate. The numerical study shows the new invariant test is more powerful than the modified LRTs by \citet{Zhang:2019}
in all cases we consider. However, we do not claim the proposed $\mathcal{K}$ is the most powerful for the problem because, 
as we learn from lower dimensional cases, there could be more powerful one than $\mathcal{K}$ for some settings. 

\section*{Acknowledgement} 

This research is supported by National Research Foundation of Korea (Grant Number: NRF-2017R1A2B
2012264).

\section*{Appendix: Proof of Theorem~\ref{thm:1}}

We will use the Cauchy's residue theorem and Lemma A.1. in \citet{Zhang:2019}.

\begin{lem}[\citealt*{Zhang:2019}, Lemma A.1]\label{lemma:1}
In addition to the Moments Assumption and the Dimensions Assumption, we further assume that:
\begin{itemize}
\item[(1)] as $\min\{p,n_{1},n_{2}\} \rightarrow \infty$, $y_{1} \rightarrow \gamma_{1} \in (0,1) \cup (1,\infty)$, $y_{2} \rightarrow \gamma_{2} \in (0,1) \cup (1,\infty)$, and $\alpha_{n} \rightarrow \alpha>0$.
\item[(2)] let $f_{1},\ldots,f_{k}$ be the analytic functions on an open region containing the interval $[a_{l},a_{r}]$, where $a_{l}=v^{-1}(1-\sqrt{\gamma_{1}})^2$, $a_{r}=1-\alpha v^{-1}(1-\sqrt{\gamma_{2}})^2$, and $v$ is defined as $v=\left(1+\frac{\gamma_{1}}{\gamma_{2}}\right)\left(1-\sqrt{\frac{\gamma_{1}\gamma_{2}}{\gamma_{1}+\gamma_{2}}}\right)^{2}$.
\end{itemize}
Then, as $\min\{p,n_{1},n_{2}\} \rightarrow \infty$, the random vector
\begin{equation} \nonumber
\left(\int f_{i} dG_{n} \right)_{i=1,\ldots,k}
\end{equation}
converges weakly to a Gaussian vector $\left(G_{f_{1}},\ldots,G_{f_{k}}\right)$ with mean functions
\begin{eqnarray}
& & \mathbb{E}G_{f_{i}} = \frac{1}{4\pi i }\oint f_{i}\left(\frac{z}{\alpha+z}\right) d\log\left(\frac{(1-\gamma_{2})m_{3}^{2}(z)+2m_{3}(z)+1-\gamma_{1}}{\left(1+m_{3}(z)\right)^2}\right) \nonumber\\
& & \quad +\frac{\Delta_{1}}{2\pi i}\oint\gamma_{1}f_{i}\left(\frac{z}{\alpha+z}\right)(1+m_{3}(z))^{-3}d m_{3}(z) \nonumber\\
& & \quad +\frac{\Delta_{2}}{4\pi i}\oint f_{i}\left(\frac{z}{\alpha+z}\right)\left(1-\frac{\gamma_{2}m_{3}^{2}(z)}{\left(1+m_{3}(z)\right)^{2}}\right)d\log\left(1-\frac{\gamma_{2}m_{3}^{2}(z)}{\left(1+m_{3}(z)\right)^{2}}\right) \label{eq:mean_part}
\end{eqnarray}
and covariance functions
\begin{eqnarray}
& & {\rm Cov}(G_{f_{i}},G_{f_{j}}) = -\frac{1}{2\pi^{2}}\oint\oint\frac{f_{i}\left(\frac{z_{1}}{\alpha+z_{1}}\right)f_{j}\left(\frac{z_{2}}{\alpha+z_{2}}\right)dm_{3}(z_{1})dm_{3}(z_{2})}{(m_{3}(z_{1})-m_{3}(z_{2}))^2} \nonumber\\
& & \quad -\frac{\gamma_{1}\Delta_{1}+\gamma_{2}\Delta_{2}}{4\pi^{2}}\oint\oint\frac{f_{i}\left(\frac{z_{1}}{\alpha+z_{1}}\right)f_{j}\left(\frac{z_{2}}{\alpha+z_{2}}\right)dm_{3}(z_{1})dm_{3}(z_{2})}{(m_{3}(z_{1})+1)^2(m_{3}(z_{2})+1)^2} \label{eq:cov_part}
\end{eqnarray}
where
\begin{eqnarray}
m_{0}(z) & = & \frac{(1+\gamma_{1})(1-z)-\alpha z(1-\gamma_{2})}{2z(1-z)(\gamma_{1}(1-z)+\alpha z\gamma_{2})} \nonumber\\
&& \quad +\frac{\sqrt{\left( (1-\gamma_{1})(1-z)+\alpha z(1-\gamma_{2}) \right)^2-4\alpha z(1-z)}}{2z(1-z)(\gamma_{1}(1-z)+\alpha z\gamma_{2})}-\frac{1}{z}, \nonumber\\
m_{1}(z) & = & \frac{\alpha}{(\alpha+z)^2}m_{0}\left(\frac{z}{\alpha +z}\right)-\frac{1}{\alpha+z}, \nonumber\\
m_{2}(z) & = & -\frac{1-\gamma_{1}}{z}+\gamma_{1}m_{1}(z), \nonumber\\
m_{mp}^{\gamma_{2}}(z) & = & \frac{1-\gamma_{2}-z+\sqrt{(z-1-\gamma_{2})^2-4\gamma_{2}}}{2\gamma_{2}z}, \nonumber\\
m_{3}(z) & = & \gamma_{2}m_{mp}^{\gamma_{2}}(-m_{2}(z))+ \frac{1-\gamma_{2}}{m_{2}(z)}. \nonumber
\end{eqnarray}
All of the above contour integrals can be evaluated on any contour enclosing the interval $\left[\frac{\alpha c_{l}}{1-c_{l}},\frac{\alpha c_{r}}{1-c_{r}}\right]$.
\end{lem}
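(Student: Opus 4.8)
The plan is to prove this CLT with the Stieltjes-transform / Bai--Silverstein martingale machinery (cf. \citet{Bai:2010}), exploiting that $G_n$ is governed by the spectrum of the $F$-matrix $S_{n_1}S_{n_2}^{-1}$. By the eigenvalue correspondence $\lambda\mapsto\lambda/(\alpha+\lambda)$ recorded in (\ref{en:lss-reg}), each $\int f_i\,dG_n$ equals the $F$-matrix linear spectral statistic of $g_i(x)=f_i\!\left(x/(\alpha+x)\right)$, which explains the appearance of $f_i(z/(\alpha+z))$ throughout (\ref{eq:mean_part})--(\ref{eq:cov_part}); it therefore suffices to establish the CLT for $F$-matrix LSS and then specialize. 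First I would pass to Stieltjes transforms: letting $m_n(z),m(z)$ denote the transforms of $F^{\{n_1,n_2\}}$ and $F^{\{y_{n_1},y_{n_2}\}}$ and $M_n(z)=p\,(m_n(z)-m(z))$, Cauchy's integral formula gives $\int g_i\,d\tilde G_{n_1,n_2}=-\frac{1}{2\pi i}\oint_{\mathcal C}g_i(z)\,M_n(z)\,dz$ on a contour $\mathcal C$ enclosing the support, so the whole problem reduces to proving that the analytic process $M_n(\cdot)$ converges weakly on $\mathcal C$ to a Gaussian process and to identifying its mean and covariance. A preliminary truncation, recentering and rescaling of the entries $x^{(l)}_{ij}$, justified by the finite fourth moments in the Moments Assumption, replaces them by bounded variables while changing each LSS by $o_P(1)$.

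I would then decompose $M_n(z)=\bigl(M_n(z)-\mathbb E M_n(z)\bigr)+\mathbb E M_n(z)$ into a mean-zero fluctuation and the deterministic mean $\mathbb E M_n(z)$, the latter converging to the bias that yields (\ref{eq:mean_part}). For the fluctuation I would use the martingale-difference expansion $M_n(z)-\mathbb E M_n(z)=\sum_k(\mathbb E_k-\mathbb E_{k-1})\,\mathrm{tr}\,R(z)$, where $\mathbb E_k$ conditions on the first $k$ sample columns and $R(z)$ is the relevant resolvent, and apply the martingale CLT: a Lyapunov bound gives asymptotic negligibility of individual increments, while convergence of the summed conditional variances yields tightness and joint Gaussianity and produces the covariance kernel. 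Separating the contributions of the resolvents of $S_{n_1}$ and $S_{n_2}$ gives the two terms of (\ref{eq:cov_part})---the double $dm_3$ integral being the second-cumulant part and the $\gamma_1\Delta_1+\gamma_2\Delta_2$ term carrying the $(m_3+1)^{-2}$ weights from the fourth cumulant.

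For the bias part I would compute $\mathbb E M_n(z)$ to leading order using the self-consistent (master) equations satisfied by $m_0,m_1,m_2,m_3$ and the Marchenko--Pastur transform $m_{mp}^{\gamma_2}$: $m_2$ is the companion transform of $S_{n_1}$, $m_3$ that of the $F$-matrix, and differentiating the defining fixed-point relation in $z$ isolates the deterministic shift. The non-Gaussian corrections surface as the $\Delta_1$ and $\Delta_2$ terms of (\ref{eq:mean_part}), arising from the fourth-cumulant correction to $\mathbb E\,\mathrm{tr}\,(S_{n_1}-zI)^{-1}$ and its $S_{n_2}$ analogue. Substituting $g_i=f_i(\cdot/(\alpha+\cdot))$ and performing the change of variables implicit in $m_3$ converts these Stieltjes-transform integrals into the stated forms, and Cauchy's residue theorem permits collapsing the contour to the one described in the statement.

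The main obstacle is the bias computation: pinning down $\mathbb E M_n(z)$ exactly, including both fourth-moment terms, requires careful control of the next-order resolvent expansions together with the coupled fixed-point relations among $m_0,\dots,m_3$, where the sign and weight bookkeeping that yields the precise $\Delta_1,\Delta_2$ coefficients is delicate. By comparison the fluctuation CLT is routine once truncation is done, though verifying the Lindeberg condition uniformly on $\mathcal C$ and matching the bracket limit to the closed-form covariance still requires care.
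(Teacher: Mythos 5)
First, note that this paper does not actually prove the statement: it is quoted verbatim as Lemma~A.1 of \citet{Zhang:2019}, and the appendix simply invokes it, so there is no internal proof to compare yours against. Judged on its own terms, your sketch follows the classical Bai--Silverstein martingale route for $F$-matrix linear spectral statistics (truncation, resolvent/Stieltjes-transform reduction via a contour integral, martingale CLT for the fluctuation, next-order expansion of $\mathbb{E}M_n(z)$ for the bias with $\Delta_1,\Delta_2$ corrections), which is the right machinery in spirit and correctly explains the appearance of $f_i\left(\frac{z}{\alpha+z}\right)$ and of the two terms in the covariance formula.

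There is, however, a genuine gap in your very first reduction. You pass from $G_n$ to the LSS of the $F$-matrix $S_{n_1}S_{n_2}^{-1}$ via the eigenvalue map $\lambda \mapsto \lambda/(\alpha+\lambda)$, but this requires $S_{n_2}$ to be invertible, i.e.\ essentially $\gamma_2<1$. The lemma explicitly allows $\gamma_1,\gamma_2 \in (0,1)\cup(1,\infty)$ with only $\lim p/(n_1+n_2)<1$, and the regime where \emph{both} $\gamma_1>1$ and $\gamma_2>1$ (e.g.\ $(n_1,n_2,p)=(45,40,50)$, used throughout the paper's simulations) is precisely one where neither $S_{n_1}$ nor $S_{n_2}$ is invertible and no $F$-matrix exists; swapping the roles of the two samples does not rescue you. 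This is exactly why \citet{Zhang:2019}, building on the Beta-matrix results of \citet{Bai:2015}, work directly with $\mathbf{B}_n = n_1 S_{n_1}(n_1 S_{n_1}+n_2 S_{n_2})^{-1}$, which is well defined whenever $n_1S_{n_1}+n_2S_{n_2}$ is nonsingular --- guaranteed almost surely under $\lim p/(n_1+n_2)<1$ --- and whose eigenvalues in $\{0\}\cup\{1\}$ are handled by the limiting spectral distribution having point masses there. To repair your argument you would need either to run the resolvent/martingale analysis on the Beta-matrix itself (the route of the cited sources) or to supply a nontrivial additional argument (regularization or perturbation of $S_{n_2}$ with uniform control of the LSS) extending the $F$-matrix CLT of \citet{Bai:2010} across the singular regime; as written, your plan only establishes the lemma for $\gamma_2<1$. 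The remaining ingredients of your sketch (two-sample martingale filtration over the columns of both data matrices, the change of variables through $m_3$, the contour enclosing $\left[\frac{\alpha c_l}{1-c_l},\frac{\alpha c_r}{1-c_r}\right]$) are standard but would also need to be carried out at the Beta-matrix level for the general case.
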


\begin{proof}[Proof of Theorem \ref{thm:1}]
The proof of Theorem \ref{thm:1} consists of three parts: the proof of the limit part $l_{n}$, the proof of the limit part $\mu_{n}$, and the proof of the limit part $\sigma_{n}^{2}$.

\begin{proof}[Proof of the limit part $l_{n}$]
We need to calculate the following integrals:
\begin{eqnarray}
& & \int_{x_{l}}^{x_{r}} x\frac{(\alpha_{n}+1)\sqrt{(x_{r}-x)(x-x_{l})}}{2\pi y_{1}x(1-x)} dx \quad \mbox{and} \label{eq:l_n1}\\
& & \int_{x_{l}}^{x_{r}} (1-x)\frac{(\alpha_{n}+1)\sqrt{(x_{r}-x)(x-x_{l})}}{2\pi y_{1}x(1-x)} dx. \label{eq:l_n2}
\end{eqnarray}
To get these values, we choose a transformation
\begin{equation} \label{eq:transformation}
x=\frac{y_{2} \left\vert y_{1}+h\xi \right\vert^{2}}{(y_{1}+y_{2})^{2}}.
\end{equation}
We can see that the integral \eqref{eq:l_n1} is rewritten as
\begin{equation} \nonumber
\eqref{eq:l_n1} = \frac{y_{2}h^{2}i}{4\pi (y_{1}+y_{2})}\oint_{|\xi|=1}\frac{(\xi^2-1)^2}{\xi^3|y_{2}-h\xi|^2}d\xi.
\end{equation}
The singularities of the integrand are 0, $\frac{y_{2}}{h}$, and $\frac{h}{y_{2}}$. If $y_{2}>1$, then $y_{2}^2-h^2=(y_{1}+y_{2})(y_{2}-1)>0$, so $y_{2}>h$. It implies that if $y_{2}>1$, $\frac{y_{2}}{h}$ is not a pole, thus $0$ and $\frac{h}{y_{2}}$ are only poles of the integrand. On the other hand, if $y_{2} < 1$, then $y_{2} < h$ and thus 0 and $\frac{y_{2}}{h}$ are only poles of the integrand. First, we assume $y_{2}>1$. Using Cauchy's residue theorem, we get the following result:
\begin{eqnarray}
\eqref{eq:l_n1} & = & \frac{y_{2}h^{2}i}{4\pi (y_{1}+y_{2})} 2\pi i\left[ \left.\frac{d}{d\xi}\frac{(\xi^{2} - 1)^{2}}{(y_{2}-h\xi)( y_{2}\xi - h)}\right\vert_{\xi = 0} + \left.\frac{(\xi^{2} - 1)^{2}}{y_{2}\xi^{2}(y_{2}-h\xi)}\right\vert_{\xi = \frac{h}{y_{2}}} \right] \nonumber\\
& = & \frac{h^2}{y_{2}(y_{1}+y_{2})}. \nonumber
\end{eqnarray}
Next, we assume $y_{2} < 1$. Using Cauchy's residue theorem again, we can calculate the integral \eqref{eq:l_n1} as
\begin{eqnarray}
\eqref{eq:l_n1} & = & \frac{y_{2}h^{2}i}{4\pi (y_{1}+y_{2})} \left[ \left.\frac{d}{d\xi}\frac{(\xi^{2} - 1)^{2}}{(y_{2}-h\xi)( y_{2}\xi - h)}\right\vert_{\xi = 0} - \left.\frac{(\xi^{2} - 1)^{2}}{h\xi^{2}(y_{2}\xi-h)}\right\vert_{\xi = \frac{y_{2}}{h}} \right] \nonumber\\
& = & \frac{y_{2}}{y_{1}+y_{2}}. \nonumber
\end{eqnarray}
Now, we consider the integral \eqref{eq:l_n2}. Using the transformation \eqref{eq:transformation}, the integral \eqref{eq:l_n2} is rewritten as
\begin{equation} \nonumber
\eqref{eq:l_n2} = \frac{y_{1}h^{2}i}{4\pi (y_{1}+y_{2})}\oint_{|\xi|=1}\frac{(\xi^2-1)^2}{\xi^3|y_{1}+h\xi|^2}d\xi.
\end{equation}
We note that the singularities of the integrand of \eqref{eq:l_n2} are 0, $-\frac{y_{1}}{h}$, and $-\frac{h}{y_{1}}$ of which absolute values are greater than or less than 1 depending only on the size of $y_{1}$. It implies that the value of \eqref{eq:l_n2} does not depend on the size of $y_{2}$. Since $\int_{x_{l}}^{x_{r}} \frac{(\alpha_{n}+1)\sqrt{(x_{r}-x)(x-x_{l})}}{2\pi y_{1}x(1-x)} dx = \frac{h^2}{y_{1}y_{2}}$ for $y_{1},y_{2} > 1$, we can easily calculate the value of \eqref{eq:l_n2} when $y_{1} > 1$ as
\begin{equation} \nonumber
\eqref{eq:l_n2} = \frac{h^2}{y_{1}y_{2}} - \frac{h^2}{y_{2}(y_{1}+y_{2})} = \frac{h^2}{y_{1}(y_{1}+y_{2})}.
\end{equation}
Similarly, if $y_{1}<1$, then
\begin{equation} \nonumber
\eqref{eq:l_n2} = 1 - \frac{y_{2}}{y_{1}+y_{2}} = \frac{y_{1}}{y_{1}+y_{2}}.
\end{equation}
Here, we used the fact that $\int_{x_{l}}^{x_{r}} \frac{(\alpha_{n}+1)\sqrt{(x_{r}-x)(x-x_{l})}}{2\pi y_{1}x(1-x)} dx = 1$ for $y_{1},y_{2} < 1$.
\end{proof}

\begin{proof}[Proof of the limit part $\mu_{n}$]

According to Lemma 5.1 in \citet{Bai:2015}, $m_{3}$ satisfies the equation
\begin{equation} \nonumber
    z=-\frac{m_{3}(m_{3}+1-y_{1})}{(1-y_{2})(m_{3}+\frac{1}{1-y_{2}})}.
\end{equation}
As in the proof of the limit part $\tilde{\mu}_{n}$ of Theorem 2.5 in \citet{Zhang:2019},
by making an integral conversion 
\begin{equation} \nonumber
    z=\frac{(1+hr\xi)(1+\frac{h}{r\xi})}{(1-y_{2})^2},
\end{equation}
where $r$ is a number greter than but close to 1, $m_{3}$ satisfies
\begin{equation} \nonumber
    \frac{(1+hr\xi)(1+\frac{h}{r\xi})}{(1-y_{2})^2}=-\frac{m_{3}(m_{3}+1-y_{1})}{(1-y_{2})m_{3}+1}.
\end{equation}
By the above equation, we can obtain $m_{3}=-\frac{1+\frac{h}{r\xi}}{1-y_{2}}$ or $m_{3}=-\frac{1+hr\xi}{1-y_{2}}$.  
When $z$ runs in the positive direction around the contour enclosing the interval $[\frac{\alpha c_{l}}{1-c_{l}},\frac{\alpha c_{r}}{1-c_{r}}]$, $m_{3}$ runs in the opposite direction. 
Thus, by substituting
\begin{equation} \nonumber
m_{3}(z)= \left\{ \begin{array}{l l}
\displaystyle -\frac{1+\frac{h}{r\xi}}{1-y_{2}}& \quad\mbox{if } y_{2}>1, \\\\
\displaystyle  -\frac{1+hr\xi}{1-y_{2}} & \quad\mbox{if } y_{2}<1,
\end{array} \right.
\end{equation}
where $r$ is a real number greater than but close to 1, we observe the followings :
\begin{eqnarray}
&& \frac{z}{\alpha+z}=\frac{\left(\xi+\frac{1}{hr}\right)\left(\xi+\frac{h}{r}\right)}{\left(\xi+\frac{y_{2}}{hr}\right)\left(\xi+\frac{h}{y_{2}r}\right)}, \nonumber\\
&& d\log\left(\frac{(1-y_{2})m_{3}^{2}(z)+2m_{3}(z)+1-y_{1}}{\left(1+m_{3}(z)\right)^2}\right) \nonumber\\
&& \qquad\qquad = \left\{ \begin{array}{l l}
\left( \frac{1}{\xi - \frac{1}{r}} + \frac{1}{\xi + \frac{1}{r}} - \frac{2}{\xi + \frac{h}{y_{2}r}} \right)d\xi & \mbox{if } y_{2} > 1, \\\\
\left( \frac{1}{\xi - \frac{1}{r}} + \frac{1}{\xi + \frac{1}{r}} - \frac{2}{\xi + \frac{y_{2}}{hr}} \right)d\xi & \mbox{if } y_{2} < 1,\end{array} \right. \nonumber\\
&& (1+m_{3}(z))^{-3}d m_{3}(z) = \left\{ \begin{array}{l l}
\frac{(1-y_{2})^{2}h}{y_{2}^{3}}\frac{1}{(\xi+\frac{y_{2}}{hr})^3}d\xi & \mbox{if } y_{2} > 1, \\\\
\frac{(1-y_{2})^2}{h^2}\frac{1}{(\xi+\frac{y_{2}}{hr})^3}d\xi & \mbox{if } y_{2} < 1,
\end{array} \right. \nonumber\\
&& \left(1-\frac{y_{2}m_{3}^{2}(z)}{(1+m_{3}(z))^{2}} \right)d\log\left(1-\frac{y_{2}m_{3}^{2}(z)}{(1+m_{3}(z))^{2}} \right) \nonumber\\
&& \qquad\qquad = \left\{ \begin{array}{l l}
\frac{(y_{2}-1)\left(\xi^2-\frac{h^{2}}{y_{2}r^2}\right)}{y_{2}\left(\xi+\frac{h}{y_{2}r}\right)^2}\left(\frac{2\xi}{\xi^2-\frac{h^{2}}{y_{2}r^2}}-\frac{2}{\xi+\frac{h}{y_{2}r}} \right)d\xi & \mbox{if } y_{2} > 1, \\\\
\frac{(1-y_{2})\left(\xi^2-\frac{y_{2}}{h^2r^2}\right)}{\left(\xi+\frac{y_{2}}{hr}\right)^2}\left(\frac{2\xi}{\xi^2-\frac{y_{2}}{h^2r^2}}-\frac{2}{\xi+\frac{y_{2}}{hr}} \right)d\xi & \mbox{if } y_{2} < 1.
\end{array} \right.\nonumber
\end{eqnarray}
First, we assume $y_{2}>1$. Then, the mean part is same with
\begin{eqnarray}
& & \mathbb{E}G_{f_{1}} = \lim_{r\downarrow 1}\frac{1}{4\pi i}\oint_{|\xi|=1}\frac{z}{\alpha+z}\left(\frac{1}{\xi-\frac{1}{r}}+\frac{1}{\xi+\frac{1}{r}}-\frac{2}{\xi+\frac{h}{y_{2}r}}\right)d\xi \label{eq:mean_1}\\
& & \quad +\lim_{r\downarrow 1}\frac{-y_{1}\Delta_{1}}{2\pi i}\oint_{|\xi|=1}\frac{z}{\alpha+z}\frac{(1-y_{2})^2h}{y_{2}^3}\frac{\xi}{(\xi+\frac{h}{y_{2}r})^3}d\xi \label{eq:mean_2}\\
& & \quad +\lim_{r\downarrow 1}\frac{\Delta_{2}}{4\pi i}\oint_{|\xi|=1}\frac{z}{\alpha+z}\frac{(y_{2}-1)(\xi^2-\frac{h^2}{y_{2}r^2})}{y_{2} (\xi+\frac{h}{y_{2}r})^2} \left(\frac{2\xi}{\xi^2-\frac{h^2}{y_{2}r^2}}-\frac{2}{\xi+\frac{h}{y_{2}r}} \right)d\xi. \label{eq:mean_3}
\end{eqnarray}
In (\ref{eq:mean_1}), the poles are $\pm\frac{1}{r}$ and $-\frac{h}{y_{2}r}$. Hence, \eqref{eq:mean_1} can be calculated as
\begin{eqnarray}
&& \eqref{eq:mean_1} = \lim_{r\downarrow 1}\frac{1}{4\pi i}\oint_{|\xi|=1} \frac{\left(\xi+\frac{1}{hr}\right)\left(\xi+\frac{h}{r}\right)}{\left(\xi + \frac{y_{2}}{hr}\right)\left(\xi+\frac{h}{y_{2}r}\right)} \left(\frac{1}{\xi-\frac{1}{r}}+\frac{1}{\xi+\frac{1}{r}}-\frac{2}{\xi+\frac{h}{y_{2}r}}\right)d\xi \nonumber\\
&& = \frac{1}{2} \left[ \left. \frac{\left(\xi+\frac{1}{hr}\right)\left(\xi+\frac{h}{r}\right)}{\left(\xi + \frac{y_{2}}{hr}\right)\left(\xi+\frac{h}{y_{2}r}\right)} \right\vert_{\xi = \frac{1}{r}} + \left. \frac{\left(\xi+\frac{1}{hr}\right)\left(\xi+\frac{h}{r}\right)}{\left(\xi + \frac{y_{2}}{hr}\right)\left(\xi+\frac{h}{y_{2}r}\right)} \right\vert_{\xi = -\frac{1}{r}} \right. \nonumber\\
&& \quad \left. + \left. \frac{\left(\xi+\frac{1}{hr}\right)\left(\xi+\frac{h}{r}\right)}{\left(\xi + \frac{y_{2}}{hr}\right)}\left( \frac{1}{\xi - \frac{1}{r}} + \frac{1}{\xi + \frac{1}{r}} \right) \right\vert_{\xi = -\frac{h}{y_{2}r}} -2 \left. \frac{d}{d\xi}\frac{\left(\xi+\frac{1}{hr}\right)\left(\xi+\frac{h}{r}\right)}{\left(\xi + \frac{y_{2}}{hr}\right)} \right\vert_{\xi = -\frac{h}{y_{2}r}} \right] \nonumber\\
&& = 0. \nonumber
\end{eqnarray}
Here, we used the relations $(y_{1}^2-h^2)=(y_{1}+y_{2})(y_{1}-1)$ and $(y_{1}-h^2)=y_{2}(y_{1}-1)$. Similarly, \eqref{eq:mean_2} and \eqref{eq:mean_3} can be calculated as follows :
\begin{eqnarray}
\eqref{eq:mean_2} & = & \lim_{r\downarrow 1}\frac{-y_{1}(1-y_{2})^{2}h\Delta_{1}}{2\pi i y_{2}^{3}}\oint_{|\xi|=1} \frac{\xi\left(\xi+\frac{1}{hr}\right)\left(\xi+\frac{h}{r}\right)}{\left(\xi+\frac{y_{2}}{hr}\right)\left(\xi+\frac{h}{y_{2}r}\right)^{4}}d\xi \nonumber\\
& = & \lim_{r\downarrow 1}\frac{-y_{1}(1-y_{2})^{2}h\Delta_{1}}{y_{2}^{3}} \left. \frac{1}{6}\frac{d^{3}}{d\xi^{3}} \frac{\xi\left(\xi+\frac{1}{hr}\right)\left(\xi+\frac{h}{r}\right)}{\left(\xi+\frac{y_{2}}{hr}\right)} \right\vert_{\xi = -\frac{h}{y_{2}r}} \nonumber\\
& = & -\frac{\Delta_{1}h^2y_{1}^2y_{2}^2}{(y_{1}+y_{2})^4}, \nonumber\\
\eqref{eq:mean_3} & = & \lim_{r\downarrow 1}\frac{\Delta_{2}}{4\pi i}\oint_{|\xi|=1} \frac{\left(\xi+\frac{1}{hr}\right)\left(\xi+\frac{h}{r}\right)}{\left(\xi+\frac{y_{2}}{hr}\right)\left(\xi+\frac{h}{y_{2}r}\right)} \frac{(y_{2}-1)(\xi^2-\frac{h^2}{y_{2}r^2})}{y_{2} (\xi+\frac{h}{y_{2}r})^2} \left(\frac{2\xi}{\xi^2-\frac{h^2}{y_{2}r^2}}-\frac{2}{\xi+\frac{h}{y_{2}r}} \right)d\xi \nonumber\\
& = & \lim_{r\downarrow 1}\frac{\Delta_{2}(y_{2}-1)}{2\pi iy_{2}} \left[ \oint_{|\xi|=1} \frac{\xi\left(\xi+\frac{1}{hr}\right)\left(\xi+\frac{h}{r}\right)}{\left(\xi+\frac{y_{2}}{hr}\right)\left(\xi+\frac{h}{y_{2}r}\right)^{3}} d\xi \right. \nonumber\\
&& \qquad\qquad\qquad\qquad\qquad\qquad\left.+ \oint_{|\xi|=1} \frac{\left(\xi+\frac{1}{hr}\right)\left(\xi+\frac{h}{r}\right)\left(\xi^2-\frac{h^2}{y_{2}r^2}\right)}{\left(\xi+\frac{y_{2}}{hr}\right)\left(\xi+\frac{h}{y_{2}r}\right)^{4}} d\xi \right] \nonumber\\
& = & \lim_{r\downarrow 1}\frac{\Delta_{2}(y_{2}-1)}{y_{2}} \left[ \frac{1}{2}\frac{d^{2}}{d\xi^{2}} \frac{\xi\left(\xi+\frac{1}{hr}\right)\left(\xi+\frac{h}{r}\right)}{\left(\xi+\frac{y_{2}}{hr}\right)} \right. \nonumber\\
&& \qquad\qquad\qquad\qquad\qquad\qquad\qquad\left.+ \frac{1}{6}\frac{d^{3}}{d\xi^{3}} \frac{\left(\xi+\frac{1}{hr}\right)\left(\xi+\frac{h}{r}\right)\left(\xi^2-\frac{h^2}{y_{2}r^2}\right)}{\left(\xi+\frac{y_{2}}{hr}\right)} \right]_{\xi = -\frac{h}{y_{2}r}} \nonumber\\
& = & \frac{\Delta_{2}h^2y_{1}^2y_{2}^2}{(y_{1}+y_{2})^4}. \nonumber
\end{eqnarray}
Therefore, when $y_{2}>1$,
\begin{equation} \nonumber
\mathbb{E}G_{f_{1}} = -\frac{\Delta_{1}h^2y_{1}^2y_{2}^2}{(y_{1}+y_{2})^4}+\frac{\Delta_{2}h^2y_{1}^2y_{2}^2}{(y_{1}+y_{2})^4}.
\end{equation}
For the case in which $y_{2}<1$, we observe
\begin{eqnarray}
& & \mathbb{E}G_{f_{1}} = \lim_{r\downarrow 1}\frac{1}{4\pi i}\oint_{|\xi|=1}\frac{z}{\alpha+z}\left(\frac{1}{\xi-\frac{1}{r}}+\frac{1}{\xi+\frac{1}{r}}-\frac{2}{\xi+\frac{y_{2}}{hr}}\right)d\xi \label{eq:mean_4}\\
& & \quad +\lim_{r\downarrow 1}\frac{-y_{1}\Delta_{1}}{2\pi i}\oint_{|\xi|=1}\frac{z}{\alpha+z}\frac{(1-y_{2})^2}{h^{2}}\frac{\xi}{(\xi+\frac{y_{2}}{hr})^3}d\xi \label{eq:mean_5}\\
& & \quad +\lim_{r\downarrow 1}\frac{\Delta_{2}}{4\pi i}\oint_{|\xi|=1}\frac{z}{\alpha+z}\frac{(1-y_{2})(\xi^2-\frac{y_{2}}{h^{2}r^2})}{(\xi+\frac{y_{2}}{hr})^2} \left(\frac{2\xi}{\xi^2-\frac{y_{2}}{h^{2}r^2}}-\frac{2}{\xi+\frac{y_{2}}{hr}} \right)d\xi. \label{eq:mean_6}
\end{eqnarray}
Through a simple calculation similar to the above, we can show
\begin{equation} \nonumber
\mathbb{E}G_{f_{1}} = -\frac{\Delta_{1}h^2y_{1}^2y_{2}^2}{(y_{1}+y_{2})^4}+\frac{\Delta_{2}h^2y_{1}^2y_{2}^2}{(y_{1}+y_{2})^4}.
\end{equation}
when $y_{2} < 1$. Therefore, it holds that
\begin{equation} \nonumber
\mu_{n}=-\frac{\Delta_{1}h^2y_{1}^2y_{2}^2}{(y_{1}+y_{2})^4}+\frac{\Delta_{2}h^2y_{1}^2y_{2}^2}{(y_{1}+y_{2})^4}.
\end{equation}
Finally, we note that according to the equation (\ref{eq:minus}) the expectation of the LSS of $\mathcal{P}_{2}$ is the opposite sign of that of $\mathcal{P}_{1}$.
\end{proof}

\begin{proof}[Proof of the limit part $\sigma_{n}^{2}$]
We need to calculate the following integral :
\begin{eqnarray}
& & {\rm Var}(G_{f_{1}}) = -\frac{1}{2\pi^{2}}\oint\oint \frac{z_{1}}{\alpha+z_{1}}\frac{z_{2}}{\alpha+z_{2}}\frac{dm_{3}(z_{1})dm_{3}(z_{2})}{(m_{3}(z_{1})-m_{3}(z_{2}))^2} \nonumber\\
& & \quad -\frac{\gamma_{1}\Delta_{1}+\gamma_{2}\Delta_{2}}{4\pi^{2}}\oint\oint \frac{z_{1}}{\alpha+z_{1}}\frac{z_{2}}{\alpha+z_{2}} \frac{dm_{3}(z_{1})dm_{3}(z_{2})}{(m_{3}(z_{1})+1)^2(m_{3}(z_{2})+1)^2} \label{eq:var}
\end{eqnarray}
First, we consider the case in which $y_{2}>1$. In this case, we observe
\begin{eqnarray}
\frac{dm_{3}(z_{1})dm_{3}(z_{2})}{(m_{3}(z_{1})-m_{3}(z_{2}))^2} & = & \frac{(1-y_{2})^2}{(\frac{h}{r_{1}\xi_{1}}-\frac{h}{r_{2}\xi_{2}})^2}\frac{hd\xi_{1}}{(1-y_{2})r_{1}\xi_{1}^2}\frac{hd\xi_{2}}{(1-y_{2})r_{2}\xi_{2}^2}=\frac{r_{1}r_{2}d\xi_{1}d\xi_{2}}{(r_{1}\xi_{1}-r_{2}\xi_{2})^2}, \nonumber\\
\frac{dm_{3}(z_{1})}{(m_{3}(z_{1})+1)^2} & = & \frac{(1-y_{2})^2}{(y_{2}+\frac{h}{r_{1}\xi_{1}})^2}\frac{hd\xi_{1}}{(1-y_{2})r_{1}\xi_{1}^2}=\frac{(1-y_{2})r_{1}hd\xi_{1}}{(y_{2}r_{1}\xi_{1}+h)^2}, \nonumber\\
\frac{dm_{3}(z_{2})}{(m_{3}(z_{2})+1)^2} & = & \frac{(1-y_{2})^2}{(y_{2}+\frac{h}{r_{2}\xi_{2}})^2}\frac{hd\xi_{2}}{(1-y_{2})r_{2}\xi_{2}^2}=\frac{(1-y_{2})r_{2}hd\xi_{2}}{(y_{2}r_{2}\xi_{2}+h)^2}. \nonumber
\end{eqnarray}
Assuming $r_{1} < r_{2}$ without loss of generality, the first term of \eqref{eq:var} can calculated as follows.
\begin{eqnarray}
&& -\frac{1}{2\pi^{2}}\oint\oint \frac{z_{1}}{\alpha+z_{1}}\frac{z_{2}}{\alpha+z_{2}}\frac{dm_{3}(z_{1})dm_{3}(z_{2})}{(m_{3}(z_{1})-m_{3}(z_{2}))^2} \nonumber\\
&& \quad = \lim_{r_{2} \downarrow 1} \lim_{r_{1} \downarrow 1} -\frac{1}{2\pi^{2}} \oint_{\left\vert \xi_{2} \right\vert = 1} \frac{\left(\xi_{2}+\frac{1}{hr_{2}}\right)\left(\xi_{2}+\frac{h}{r_{2}}\right)}{\left(\xi_{2}+\frac{y_{2}}{hr_{2}}\right)\left(\xi_{2}+\frac{h}{y_{2}r_{2}}\right)} \nonumber\\
&& \qquad\qquad \left[ \oint_{\left\vert \xi_{1} \right\vert = 1}  \frac{\left(\xi_{1}+\frac{1}{hr_{1}}\right)\left(\xi_{1}+\frac{h}{r_{1}}\right)}{\left(\xi_{1}+\frac{y_{2}}{hr_{1}}\right)\left(\xi_{1}+\frac{h}{y_{2}r_{1}}\right)} \frac{r_{1}r_{2}}{(r_{1}\xi_{1}-r_{2}\xi_{2})^2}d\xi_{1} \right] d\xi_{2}\nonumber\\
&& \quad = \lim_{r_{2} \downarrow 1} \frac{1}{\pi i} \oint_{\left\vert \xi_{2} \right\vert = 1} \frac{\left(\xi_{2}+\frac{1}{hr_{2}}\right)\left(\xi_{2}+\frac{h}{r_{2}}\right)}{\left(\xi_{2}+\frac{y_{2}}{hr_{2}}\right)\left(\xi_{2}+\frac{h}{y_{2}r_{2}}\right)} \frac{h(y_{2}-h^{2})(y_{2}-1)}{y_{2}r_{2}(y_{2}^{2}-h^{2})\left( \xi_{2} + \frac{h}{y_{2}r_{2}}\right)} d\xi_{2} \nonumber\\
&& \quad = \frac{2h^2y_{1}^2y_{2}^2}{(y_{1}+y_{2})^4} \nonumber
\end{eqnarray}
The second term of \eqref{eq:var} can be expressed as
\begin{eqnarray}
&& -\frac{y_{1}\Delta_{1}+y_{2}\Delta_{2}}{4\pi^{2}}\oint\oint \frac{z_{1}}{\alpha+z_{1}}\frac{z_{2}}{\alpha+z_{2}} \frac{dm_{3}(z_{1})dm_{3}(z_{2})}{(m_{3}(z_{1})+1)^2(m_{3}(z_{2})+1)^2} \nonumber\\
&& \quad = -\frac{y_{1}\Delta_{1}+y_{2}\Delta_{2}}{4\pi^{2}}\oint \frac{z_{1}}{\alpha+z_{1}} \frac{dm_{3}(z_{1})}{(m_{3}(z_{1})+1)^2} \times \oint \frac{z_{2}}{\alpha+z_{2}} \frac{dm_{3}(z_{2})}{(m_{3}(z_{2})+1)^2} \nonumber\\
&& \quad = (y_{1}\Delta_{1}+y_{2}\Delta_{2}) \left( -\frac{h^{2}y_{1}y_{2}}{(y_{1}+y_{2})^{3}} \right)^{2} = (y_{1}\Delta_{1}+y_{2}\Delta_{2}) \frac{h^{4}y_{1}^{2}y_{2}^{2}}{(y_{1}+y_{2})^{6}} \nonumber
\end{eqnarray}
Therefore, we get the desired result :
\begin{equation} \nonumber
\sigma_{n}^{2} = \frac{2h^2y_{1}^2y_{2}^2}{(y_{1}+y_{2})^4}+ (y_{1}\Delta_{1}+y_{2}\Delta_{2})\frac{h^4y_{1}^2y_{2}^2}{(y_{1}+y_{2})^6}
\end{equation}
When $y_{2}<1$, by the symmetry of $\frac{dm_{3}(z_{1})dm_{3}(z_{2})}{(m_{3}(z_{1})-m_{3}(z_{2}))^2}$, it is the same as the previous case and
\begin{eqnarray}
\frac{dm_{3}(z_{1})}{(m_{3}(z_{1})+1)^2} & = & \frac{(1-y_{2})^2}{(y_{2}+hr_{1}\xi_{1})^2}\frac{hr_{1}d\xi_{1}}{y_{2}-1}=\frac{(y_{2}-1)r_{1}hd\xi_{1}}{(hr_{1}\xi_{1}+y_{2})^2} \nonumber\\
\frac{dm_{3}(z_{2})}{(m_{3}(z_{2})+1)^2} & = & \frac{(1-y_{2})^2}{(y_{2}+hr_{2}\xi_{2})^2}\frac{hr_{2}d\xi_{2}}{y_{2}-1}=\frac{(y_{2}-1)r_{2}hd\xi_{2}}{(hr_{2}\xi_{2}+y_{2})^2}. \nonumber
\end{eqnarray}
Using the above relations, one can easily show that the variance of the LSS is equal to that in the case in which $y_{2}>1$.
\end{proof}

These complete the proof of Theorem \ref{thm:1}.
\end{proof}

\end{document}